\documentclass[11pt]{amsart}
\usepackage{amsfonts, latexsym, amssymb, amsgen, amsbsy, amstext, amsopn, amsmath, txfonts, mathrsfs, yfonts}
\usepackage{color}
\usepackage{verbatim}
\usepackage[toc,page]{appendix}
\usepackage{hyperref}




\setlength{\textheight}{8.2in}
\setlength{\topmargin}{0.2in}
\setlength{\oddsidemargin}{.2in}
\setlength{\evensidemargin}{.2in}
\setlength{\textwidth}{6in}


\newcommand{\bbN}{\mathbb{N}}
\newcommand{\Q}{\mathbb{Q}}
\newcommand{\bbR}{\mathbb{R}}

\newcommand{\N}{\mathbb{N}}


\newcommand{\vG}{\varmathbb{G}}

\newcommand{\vR}{\varmathbb{R}}
\newcommand{\vM}{\varmathbb{M}}

\newcommand{\vW}{\varmathbb{W}}

\newcommand{\vX}{\varmathbb{X}}
\newcommand{\vY}{\varmathbb{Y}}

\newcommand{\vV}{\varmathbb{V}}



\newcommand{\cI}{\mathcal{I}}
\newcommand{\cL}{\mathcal{L}}
\newcommand{\cN}{\mathcal{N}}

\newcommand{\cJ}{\mathcal{J}}

\newcommand{\cS}{\mathcal{S}}


\newcommand{\cp}{\mathfrak{p}}
\newcommand{\cv}{\mathfrak{v}}

\newcommand{\cw}{\mathfrak{w}}




\newcommand{\dist}{\mbox{dist}}


\newcommand{\pa}[1]{\left( #1 \right)}               
\newcommand{\set}[1]{\left\{ #1 \right\}}            
\newcommand{\pal}[1]{\left| #1 \right|}            


\newcommand{\ep}{\varepsilon}

\newcommand{\ph}{\varphi}

\newcommand{\sm}{\setminus}


\newcommand{\lan}{\langle}
\newcommand{\ran}{\rangle}

\newcommand{\lra}{\longrightarrow}

\newcommand{\der}{\partial}




\newtheorem{Teo}{Theorem}
\newtheorem{Claim}{Claim}

\newtheorem{Lem}[Teo]{Lemma}
\newtheorem{Pro}[Teo]{Proposition}

\theoremstyle{definition}
\newtheorem{Def}[Teo]{Definition}

\newtheorem{Rem}[Teo]{Remark}

\numberwithin{Teo}{section}
\numberwithin{equation}{section}

\begin{document}
\title[Porosity and Differentiability]
{
Porosity and differentiability of Lipschitz maps from stratified groups to Banach homogeneous groups}
\author[Valentino Magnani]{Valentino Magnani}
\address[Valentino Magnani]{Department of Mathematics, University of Pisa, Largo Pontecorvo 5, 56127 Pisa, Italy}
\email{valentino.magnani@unipi.it}
\author[Andrea Pinamonti]{Andrea Pinamonti}
\address[Andrea Pinamonti]{Department of Mathematics, University of Trento, Via Sommarive 14, 38050 Povo (Trento), Italy}
\email{Andrea.Pinamonti@gmail.com}
\author[Gareth Speight]{Gareth Speight}
\address[Gareth Speight]{Department of Mathematical Sciences, University of Cincinnati, 2815 Commons Way, Cincinnati, OH 45221, United States}
\email{Gareth.Speight@uc.edu}

\date{\today}
\thanks{V.~M.~acknowledges the support of the University of Pisa, (Institutional Research Grant) Project\ PRA\_2016\_41. A.~P.~acknowledges the support of the Istituto Nazionale di Alta  Matematica F. Severi.}
\renewcommand{\subjclassname} {\textup{2010} Mathematics Subject Classification}
\subjclass[2010]{28A75, 43A80, 49Q15, 53C17}


\keywords{stratified group, Carnot group, Banach homogeneous group, Carnot-Carath\'eodory distance, Lipschitz map, differentiability, porous set}

\begin{abstract}
Let $f$ be a Lipschitz map from a subset $A$ of a stratified group to a Banach homogeneous group. We show that directional derivatives of $f$ act as homogeneous homomorphisms at density points of $A$ outside a $\sigma$-porous set.
At all density points of $A$ we establish a pointwise characterization of differentiability in terms of  directional derivatives. These results naturally lead us to an alternate proof of almost everywhere differentiability of Lipschitz maps from subsets of stratified groups to Banach homogeneous groups satisfying a suitably weakened Radon-Nikodym property.
\end{abstract}

\maketitle

\section{Introduction}\label{intro}

Stratified groups are a special class of finite dimensional, connected, simply connected and nilpotent Lie groups (Definition~\ref{Def:stratified}). These groups, equipped with the so-called {\em homogeneous norm}, were introduced by Folland in the framework of subelliptic PDE \cite{Fol75}. They subsequently appeared in the work of Pansu under the name of Carnot groups, where their metric structure was defined by the so-called Carnot-Carath\'eodory distance \cite{PP}. In the last two decades there has been increasing interest in the relationship between the geometry of stratified groups and other areas of mathematics, such as PDE, differential geometry, control theory, geometric measure theory, mathematical finance and robotics \cite{ABB, B94, BLU, CDPT07, Gro96, Mon02, SC16}. 

Nevertheless several interesting and challenging problems remain open, since the geometry of stratified groups is highly non-trivial. For instance, rectifiability of the reduced boundary in higher step groups, the isoperimetric problem, the regularity of minimal surfaces, the regularity of geodesics and the validity of general coarea formulae for Lipschitz mappings.
One of the basic reasons for these difficulties is that any noncommutative stratified group contains no subset of positive measure that is bi-Lipschitz equivalent to a subset of a Euclidean space \cite{AmbKir00, HajMal15,Mag04, Sem96}. In all of these results 
the key role is played by the Pansu's generalization of Rademacher's theorem to Carnot groups \cite{PP}.

This differentiation theorem states that every Lipschitz map from an open subset of a stratified group to another stratified group is differentiable almost everywhere with respect to the natural Haar measure. Here differentiability is defined like classical differentiability, but takes into account the new geometric and algebraic structure (Definition~\ref{def:hdiff}). More broadly, a great effort has been made to understand differentiability properties of Lipschitz functions in different settings, like stratified groups, Banach spaces and more general metric measure spaces. We mention 
only a few papers \cite{Alb,Bate15,Che99,CK, CMPSC1,CMPSC2,FSC2,LP03,Mag,PinS15,Pre90,PS15,PZ01} to give a glimpse
of the many works in this area.

In the setting of noncommutative stratified groups, the first named author and Rajala extended Pansu's theorem to Lipschitz mappings on a measurable set of a stratified group and taking values in a \emph{Banach homogeneous group}
(Definition~\ref{bhg}). They proved the following theorem \cite{MagTap14}.

\begin{Teo}\label{thm:Pansudiff}
Suppose $\vM$ is a Banach homogeneous group whose first layer 
$H_1 \subset \vM$ has the RNP. If $f \colon A \to \vM$ is Lipschitz,
$A \subset \vG$ and $\vG$ is a stratified group, then $f$ 
is almost everywhere differentiable.
\end{Teo}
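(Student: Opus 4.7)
The plan is to combine the two main results announced in the abstract: first, outside a $\sigma$-porous subset of the density points of $A$, the directional derivatives of $f$ assemble into a homogeneous homomorphism $\vG\to\vM$; second, at every density point where such an assembly is available, the pointwise characterization of differentiability forces the candidate homomorphism to be the full differential. Since $\sigma$-porous sets in a stratified group (equipped with its Haar measure $\m$, which is doubling) are $\m$-negligible, and the density points of $A$ have full measure in $A$, the two exceptional sets together cover only a null subset of $A$, which yields differentiability $\m$-a.e.

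First I would establish the existence of horizontal directional derivatives almost everywhere. Fixing $v\in V_1$ in the first layer of $\vG$, the map $t\mapsto f(x\exp(tv))$ is Lipschitz in $t$ wherever $x\exp(tv)\in A$, and the difference quotients $\delta_{1/t}\bigl(f(x)^{-1}f(x\exp(tv))\bigr)$ lie, up to an error controlled by the Lipschitz constant and the Baker--Campbell--Hausdorff formula, in a bounded subset of the first layer $H_1\subset\vM$. The RNP of $H_1$ then gives differentiability of such curves almost everywhere, and a Fubini argument along the horizontal foliation of $\vG$ produces the directional derivative $D_v f(x)$ for $\m$-a.e.\ $x\in A$.

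Next I would invoke the porosity theorem: the set $E\subset A$ of density points at which $v\mapsto D_v f(x)$ fails to extend to a homogeneous homomorphism is $\sigma$-porous, hence $\m$-negligible. For every density point $x\in A\setminus E$ the directional derivatives determine a candidate homogeneous homomorphism $L_x\colon\vG\to\vM$, and the pointwise characterization of differentiability then forces $f$ to be differentiable at $x$ with differential $L_x$. Combining these pieces gives the theorem.

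The main obstacle is the porosity step: producing a quantitative passage from failure of the homomorphism identities (additivity in $V_1$ and homogeneity along concatenations of horizontal segments) to the existence of macroscopic balls disjoint from $A$. This is where the Banach-valued target makes the analysis substantially more delicate than in the finite-dimensional Pansu setting, since one cannot rely on compactness of the unit sphere of $\vM$; the RNP hypothesis enters only to produce the directional derivatives, while the porosity estimate carries the geometric core of the argument.
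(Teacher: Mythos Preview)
Your outline is correct and follows exactly the paper's approach: horizontal directional derivatives exist a.e.\ via the RNP of $H_1$ and a Fubini argument (the paper's Theorem~\ref{lineardensityderivatives}), the $\sigma$-porosity result (Theorem~\ref{generalcase}) upgrades a finite spanning set of horizontal derivatives to a full homogeneous homomorphism at density points outside a null set, and the pointwise characterization (Theorem~\ref{precisesubset}) then gives differentiability. One small slip in your closing commentary: the macroscopic balls in the porosity argument are disjoint from the \emph{bad set} $E$ (where the homomorphism identity fails), not from $A$; and the difference quotients do not literally lie near $H_1$---the RNP step (cited from \cite{MagTap14}) is somewhat more delicate than your sketch suggests.
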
 

It is easy to notice that every stratified group is a Banach homogeneous group and the class of finite dimensional Banach homogeneous groups coincides with that of homogeneous groups, as defined in \cite{FS82}.
Commutative Banach homogeneous groups coincide with Banach spaces. The simplest example besides these is an infinite dimensional version of the Heisenberg group $H^2\times\vR$. We consider the infinite dimensional real Hilbert space $H$ with scalar product $\left\langle\cdot ,\cdot \right\rangle$ and the group operation is defined as follows:
\[
(h_1,h_2,t_1)\cdot (h_1',h_2',t_1')=(h_1+h_1', h_2+h_2', t_1+t_2+\langle h_1, h_2' \rangle- \langle h_2, h_1' \rangle),
\]
for any $(h_1,h_2,t_1), (h_1',h_2',t_1')\in H^{2} \times\vR$. We refer the interested reader to \cite{MagTap14} for more examples.

The present paper continues the investigation started in \cite{PinS16}, on
the relationship between porosity and differentiability of Lipschitz maps on stratified groups. A set in a metric space is (upper) porous (Definition~\ref{def_porous}) if each of its points sees nearby relatively large holes in the set on arbitrarily small scales. A set is $\sigma$-porous if it is a countable union of porous sets. Proving a set is $\sigma$-porous is useful because $\sigma$-porous sets in metric spaces are first category and have measure zero with respect to any doubling measure \cite{Zaj87, Zaj05}. Showing a set is $\sigma$-porous usually gives a stronger result than showing it is of first category or has measure zero.

It is well known that porosity is a useful tool to study differentiability of Lipschitz mappings. We refer the reader to the survey articles \cite{Zaj87, Zaj05} for more information about porous sets and their applications. Porosity has been also recently used in Euclidean spaces to obtain the following result: for any $n>1$, there exists a Lebesgue null set in $\mathbb{R}^{n}$ containing a point of differentiability for every Lipschitz mapping from $\mathbb{R}^{n}$ to $\mathbb{R}^{n-1}$ \cite{PS15}. In the Banach space setting, \cite{LP03, LPT12} give a version of Rademacher's theorem for Frech\'{e}t differentiability of Lipschitz mappings on Banach spaces in which porous sets are negligible in a suitable sense. Other results have also been studied in stratified groups \cite{LPS17, PinS15, PS3, PS4}. 

In \cite{PinS16}, the second and third named authors used porosity to study some fine differentiability properties of Lipschitz mappings from stratified groups to Euclidean spaces. 
In the present paper, we improve upon \cite{PinS16} in two directions: we consider more general classes of domains and targets of the Lipschitz mapping.
We allow the mapping to be defined on a measurable subset of a stratified group. Due to the absence in general of Lipschitz extensions between stratified groups, some technical difficulties appear. The Euclidean space in the target is replaced by a Banach homogeneous group. Thus, the noncommutative infinite dimensional structure of a Banach homogeneous group adds further difficulties. 

Let $A$ be a subset of a stratified group $\vG$,  $\vM$ be a Banach homogeneous group, and consider a Lipschitz map $f:A\subset\vG\to\vM$.
Our first result states that directional derivatives of $f$ at density points of the domain act as homogeneous homomorphisms outside a $\sigma$-porous set.  Here it is worth to emphasize that the notion of directional derivative (Definition~\ref{directionalderivative}) takes into account the fact that $f$ need not be defined on a whole neighborhood of the point. 
For the definition of homogeneous homomorphism, see Definition~\ref{h-hom}. Note that $\partial^{+}f(x, \zeta)$ and $\delta_{a}\zeta$ denote directional derivatives and dilations, as defined in Section \ref{preliminaries} and $D(A)$ denotes the set of density points of $A\subset \vG$.

\begin{Teo}\label{generalcase}
Suppose $A\subset \vG$ is measurable and $f\colon A\to \vM$ is Lipschitz. 
Then there is a $\sigma$-porous set $P\subset \vG$ such that directional derivatives act as homogeneous homomorphisms outside $P$. Namely, whenever $x\in A\cap D(A) \setminus P$ the following implication holds: if $\partial^{+}f(x, \zeta)$ and $\partial^{+}f(x, \eta)$ exist for some $\zeta,\eta\in \vG$, then $\partial^{+}f(x,\delta_{a}\zeta \delta_{b} \eta)$ exists for every $a,b>0$ and
\[
\partial^{+}f(x,\delta_{a}\zeta  \delta_{b} \eta)= \delta_{a}\partial^{+}f(x,\zeta) \delta_{b}\partial^{+}f(x,\eta).
\]
\end{Teo}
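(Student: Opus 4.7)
The plan is to isolate a direction-free, scale-based obstruction to the multiplicative identity and to show that its failure locus is $\sigma$-porous; off this locus, the identity will follow from a factorization of $f(x\delta_{ta}\zeta\delta_{tb}\eta)$ combined with the automorphism property of dilations. Introduce the scale-$t$ difference quotient
\[
\Delta_{t}f(y,v) := \delta_{1/t}\bigl(f(y)^{-1}f(y\delta_{t}v)\bigr),
\]
defined whenever $y,\,y\delta_{t}v\in A$. I would factor
\[
f(x)^{-1}f(x\delta_{ta}\zeta\delta_{tb}\eta) = \bigl[f(x)^{-1}f(x\delta_{ta}\zeta)\bigr]\cdot\bigl[f(x\delta_{ta}\zeta)^{-1}f(x\delta_{ta}\zeta\delta_{tb}\eta)\bigr],
\]
apply the automorphism $\delta_{1/t}$, and use $\delta_{1/t}=\delta_{a}\circ\delta_{1/(ta)}=\delta_{b}\circ\delta_{1/(tb)}$ to rewrite the dilated left side as
\[
\delta_{a}\Delta_{ta}f(x,\zeta)\cdot\delta_{b}\Delta_{tb}f(x\delta_{ta}\zeta,\eta).
\]
Along sequences $t_{n}\downarrow 0$ with $x\delta_{t_{n}a}\zeta\in A$ realizing $\partial^{+}f(x,\zeta)$, the first factor tends to $\delta_{a}\partial^{+}f(x,\zeta)$ by the definition of the directional derivative. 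The core task is to force the second factor to $\delta_{b}\partial^{+}f(x,\eta)$, which is delicate because the basepoint has shifted from $x$ to $x\delta_{ta}\zeta$.

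To handle this, I would quantify an approximate translation-invariance of the difference quotient and discard the points at which it fails. For each $k\in\bbN$ define $P_{k}$ to consist of those $x\in A$ such that, at arbitrarily small scales $r>0$, there exist $y\in A\cap B(x,r)$, a direction $v\in\vG$ of unit homogeneous norm, and $s\in(0,r)$ with $x\delta_{s}v,\,y\delta_{s}v\in A$, for which $\Delta_{s}f(y,v)$ and $\Delta_{s}f(x,v)$ disagree by at least $1/k$ in the homogeneous norm of $\vM$. Showing that each $P_{k}$ is porous is the technical core and adapts the porosity mechanism of \cite{PinS16} to the noncommutative group-valued setting: the Lipschitz control on $f$ together with a quantitative mismatch of the two difference quotients at nearby basepoints produces, via a pigeonhole over scales and directions, a ball of radius proportional to $r/k$ inside $B(x,r)$ that avoids $P_{k}$. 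Setting $P=\bigcup_{k}P_{k}$ then yields the required $\sigma$-porous set.

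At $x\in A\cap D(A)\setminus P$, extract a common sequence $t_{n}\downarrow 0$ for which $x\delta_{t_{n}a}\zeta$ and $x\delta_{t_{n}a}\zeta\delta_{t_{n}b}\eta$ both lie in $A$; the density-point property of $x$, combined with the realizing sequences furnished by the two given directional derivatives, makes such a common sequence available. Since $x\notin P_{k}$ for every $k$, the shift from the basepoint $x$ to $x\delta_{t_{n}a}\zeta$ is asymptotically negligible at the relevant scale $t_{n}b$, so $\Delta_{t_{n}b}f(x\delta_{t_{n}a}\zeta,\eta)\to\partial^{+}f(x,\eta)$; combined with the first-factor limit, the factorization delivers the stated product formula. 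The main obstacle is the porosity claim for $P_{k}$: the infinite-dimensional Banach target $\vM$ prevents any reduction to countably many test directions, so the bad set must be described intrinsically through $f$'s own difference quotients rather than through a countable family of linear forms; the domain is only measurable, so Lebesgue density is the only tool for locating the required translates $y\delta_{s}v$ inside $A$; and the noncommutative higher-layer structure of both $\vG$ and $\vM$ demands that all increments be measured in the intrinsic homogeneous distance, which is precisely what motivates working with the group-valued $\Delta_{s}f$ in place of a linear analogue.
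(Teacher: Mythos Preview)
Your factorization
\[
\delta_{1/t}\bigl(f(x)^{-1}f(x\delta_{t}\zeta\delta_{t}\eta)\bigr)=\Delta_{t}f(x,\zeta)\cdot\Delta_{t}f(x\delta_{t}\zeta,\eta)
\]
is correct and is indeed the heart of the matter. The genuine gap is the porosity of your sets $P_{k}$. As you define it, $x\in P_{k}$ is an existential condition (some nearby $y$, some unit $v$, some $s<r$), so $x\notin P_{k}$ is the very strong universal statement that \emph{every} difference quotient based near $x$ agrees with the one based at $x$ to within $1/k$, for \emph{all} directions and \emph{all} small scales simultaneously. To prove porosity you must exhibit, near each $x\in P_{k}$, a ball consisting entirely of such highly regular points; nothing in your witness data $(y,v,s)$ points to where such a ball might lie, and the vague appeal to ``pigeonhole over scales and directions'' cannot work because the directions form an uncountable family. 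Compare this with the paper's sets $P(\zeta,\eta,y,z,\varepsilon,\delta,A)$: membership there requires \emph{both} a family of good estimates valid at \emph{all} small scales (\eqref{A1}--\eqref{A4}) \emph{and} a bad estimate at \emph{some} arbitrarily small scale (\eqref{badt}--\eqref{badt2}). The hole is found by moving to $x\zeta_{x}^{t}$ at a bad scale $t$ and using the good estimates that any hypothetical $p\in P$ inside that ball would itself satisfy to contradict the bad estimate at $x$. Your $P_{k}$ has only the bad half of this structure, so there is nothing to leverage.

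Two further points. First, your claim that the infinite-dimensional target ``prevents any reduction to countably many test directions'' is exactly what the paper refutes: via Theorem~\ref{Teo:separable_image} one may assume $\vM$ is separable, and the $\sigma$-porous set is then built from countably many porous pieces indexed by $\zeta',\eta'$ in a countable dense subset of $\vG$, by $y,z$ in a countable dense subset of $\vM$, and by rational $\varepsilon,\delta$. Second, your $\Delta_{t}f(y,v)$ requires the exact point $y\delta_{t}v$ to lie in $A$, whereas at density points one only has approximating points $\zeta_{x}^{t}$ with $d(\zeta_{x}^{t},\delta_{t}\zeta)=o(t)$; the paper's Definition~\ref{Def:P} is phrased entirely in terms of such approximations, and your extraction of a ``common sequence $t_{n}$'' with $x\delta_{t_{n}}\zeta,\ x\delta_{t_{n}}\eta,\ x\delta_{t_{n}}\zeta\delta_{t_{n}}\eta$ all in $A$ need not be possible.
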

Surprisingly, the techniques of \cite[Theorem 2]{PZ01} can be extended to the nonlinear framework of stratified groups. Nontrivial obstacles arise from the structure of both the domain and the groups involved.
Theorem~\ref{generalcase} precisely generalizes \cite[Theorem 3.7]{PinS16} from real valued Lipschitz mappings to Banach homogeneous group valued Lipschitz mappings. 

For real valued Lipschitz mappings, directional derivatives were defined using the Lie algebra of the stratified group and only horizontal directions were used. 
In the present paper we realize that directional derivatives must be introduced for all directions, using the Lie group structures of $\vG$ and $\vM$. This important difference is necessary to reflect the noncommutative structure of the target and it was rather unexpected.
We notice, indeed, that the restriction of a Lipschitz mapping to a nonhorizontal curve is no longer Lipschitz in the Euclidean sense.

Our second main result characterizes points of differentiability at density points of the domain. For the definition of directional derivatives, see Definition~\ref{directionalderivative}.

\begin{Teo}\label{precisesubset}
Suppose $A\subset\vG$ is measurable, $f:A\to \vM$ is Lipschitz and $x\in A\cap D(A)$. 
Then $f$ is differentiable at $x$ if and only if the following properties hold: 
\begin{enumerate}
	\item $f$ is differentiable at $x$ in any direction $\zeta\in\vG\setminus\set{0}$ 
	\item the mapping $\vG\ni \zeta\to \partial^+f(x,\zeta)\in\vM$ is a homogeneous homomorphism.
\end{enumerate}
\end{Teo}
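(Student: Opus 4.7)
\emph{Plan.} The implication ``$\Rightarrow$'' is immediate from the definitions. If $f$ is differentiable at $x$ with differential $L$, then $L$ is a homogeneous homomorphism by definition, and evaluating the differentiability condition along any direction $\zeta\in\vG\setminus\{0\}$ (possible thanks to $x\in D(A)$) yields $\partial^{+}f(x,\zeta)=L(\zeta)$, so both (1) and (2) hold.

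For ``$\Leftarrow$'', assume (1) and (2) and set $L(\zeta):=\partial^{+}f(x,\zeta)$, which by (2) is a homogeneous homomorphism $\vG\to\vM$. Suppose by contradiction that $f$ is not differentiable at $x$. Then we can find $\epsilon>0$ and a sequence $y_n\in A$ with $y_n\neq x$, $y_n\to x$, satisfying
\[
d_{\vM}\bigl(f(y_n),\,f(x)\,L(x^{-1}y_n)\bigr)\,\ge\,\epsilon\,d(x,y_n).
\]
Setting $r_n:=d(x,y_n)\to 0^{+}$ and $\zeta_n:=\delta_{1/r_n}(x^{-1}y_n)$, one has $d(0,\zeta_n)=1$ and $y_n=x\,\delta_{r_n}\zeta_n$. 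Compactness of the unit sphere in the finite dimensional group $\vG$ allows us, up to a subsequence, to assume $\zeta_n\to\zeta$ with $d(0,\zeta)=1$. Since $\delta_{r_n}$ is a group automorphism scaling distances by $r_n$ and $L(\delta_{r_n}\zeta_n)=\delta_{r_n}L(\zeta_n)$, the previous inequality rewrites as
\[
d_{\vM}\bigl(L(\zeta_n)^{-1}\,\delta_{1/r_n}(f(x)^{-1}f(y_n)),\,0\bigr)\,\ge\,\epsilon. \qquad (\ast)
\]

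Homogeneous homomorphisms are continuous, since their restriction to the first layer is linear between a finite dimensional space and a Banach space, so $L(\zeta_n)\to L(\zeta)$; thus $(\ast)$ is contradicted once we prove that $\delta_{1/r_n}(f(x)^{-1}f(y_n))\to L(\zeta)$. Here the density hypothesis $x\in D(A)$ enters: for each $n$ select $z_n\in A$ with $d(z_n,\,x\,\delta_{r_n}\zeta)=o(r_n)$, which is possible because the ball of radius $r_n$ around $x$ is asymptotically filled by $A$. The Lipschitz estimate
\[
d_{\vM}(f(y_n),f(z_n))\,\le\,\Lip(f)\bigl(o(r_n)+r_n\,d(\zeta_n,\zeta)\bigr)
\]
implies, after scaling by $\delta_{1/r_n}$, that $\delta_{1/r_n}(f(x)^{-1}f(y_n))$ and $\delta_{1/r_n}(f(x)^{-1}f(z_n))$ differ by a vanishing term; meanwhile (1) applied in the direction $\zeta$, together with the fact that $z_n\in A$ sits close to the ray $t\mapsto x\,\delta_t\zeta$, gives $\delta_{1/r_n}(f(x)^{-1}f(z_n))\to L(\zeta)$, as required.

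\emph{Main obstacle.} The delicate step is the last one: the directional derivative at $x$ in direction $\zeta$ is formulated through a limit over scales $t\to 0^{+}$ for which $x\,\delta_t\zeta$ is accessible from $A$, and a priori this has no relation to the specific scales $r_n$ dictated by the failure of differentiability. The density of $A$ at $x$ is exactly what reconciles the two, producing approximating points $z_n\in A$ at the required scales and allowing the directional-derivative limit in direction $\zeta$ to be transferred to the tilted sequence $(y_n)$, compatibly with whichever density-point formulation underlies the definition of $\partial^{+}f$.
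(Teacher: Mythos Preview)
Your proof is correct. The forward direction matches the paper's, and your backward direction via contradiction and sequential compactness of the unit sphere in $\vG$ is valid. The one point deserving slightly more care is the final step---that $\delta_{1/r_n}(f(x)^{-1}f(z_n))\to L(\zeta)$---since the paper's Definition~\ref{directionalderivative} of the directional derivative is phrased for a full curve $t\mapsto\zeta_x^t$, not just along the particular sequence $r_n$. This is easily fixed: splice your points $x^{-1}z_n$ at the values $t=r_n$ into the approximating curve furnished by Lemma~\ref{convzeta}, or simply invoke Remark~\ref{derivindependent}, which says that for Lipschitz $f$ one approximating curve suffices. Your ``main obstacle'' paragraph already identifies this correctly.

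The paper organizes the backward direction differently: rather than arguing by contradiction, it proves directly that the convergence
\[
\delta_{1/t}\big(f(x)^{-1}f(x\zeta_x^t)\big)\to\partial^+f(x,\zeta)
\]
is \emph{uniform} over $d(\zeta)\le 1$, by taking a finite $\varepsilon$-net $S$ of the unit ball, using the finitely many directional derivatives at the points of $S$, and bridging to nearby directions via the Lipschitz estimates for both $f$ and the h-homomorphism $L$ together with the uniformity clause in Lemma~\ref{convzeta}. Differentiability then follows immediately from this uniform convergence. Your sequential-compactness argument and the paper's $\varepsilon$-net argument are the two standard dual ways to upgrade pointwise convergence to uniform convergence on a compact set; they use exactly the same ingredients (finite dimensionality of $\vG$, Lipschitz continuity of $f$ and of $L$, density of $A$ at $x$) and neither offers a real advantage over the other here.
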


As a consequence of Theorem~\ref{generalcase} and Theorem~\ref{precisesubset} we prove that, at density points of the domain except for a $\sigma$-porous set, existence of directional derivatives in a spanning set of horizontal directions implies differentiability. If the horizontal space of $\vM$ satisfies the RNP then directional derivatives in this finite set of directions exist at almost every point of $A$ (Theorem~\ref{lineardensityderivatives}). As a byproduct, taking into account that porous sets have measure zero, we are lead to a different and simpler way to establish both Theorem~ \ref{thm:Pansudiff} and then also Pansu's Theorem \cite{PP}.
 
The results of this paper illustrate the role played by porosity to pass from almost everywhere existence of horizontal directional derivatives \cite[Theorem 3.1]{MagTap14} to almost everywhere differentiability of Lipschitz maps. The use of porosity provides a more natural approach to differentiability and provides more precise information about how differentiability occurs.

We now describe the structure of the paper. In Section~\ref{preliminaries} we give the main definitions. In Section~\ref{subsets} we prove Theorem~\ref{generalcase}, while in Section~\ref{diff} we prove Theorem~\ref{precisesubset}. Section~\ref{ProofMag} is devoted to a few technical facts. The first one is the construction of a separable Banach homogeneous target for our given Lipschitz mapping. We then give a proof of a technical lemma that we use earlier for estimating distances. Finally we prove that the set of points where $f$ is differentiable in some directions is measurable. This measurability is important to establish Theorem~\ref{lineardensityderivatives} and its proof is rather technical since our Lipschitz mapping cannot be extended to the whole space.

\vspace{.3cm}

\noindent \textbf{Acknowledgement.} The second and third named authors thank the organizers of the workshop ``Singular Phenomena and Singular Geometries", Pisa, 20-23 June 2016, for the warm hospitality and for the scientific environment that inspired the project of the present paper.

\section{Preliminaries}\label{preliminaries}

This section describes the main notions used throughout the paper.

\subsection{Banach Lie algebras and Banach homogeneous groups}

We recall only basic facts about Banach homogeneous groups. More information and additional examples can be found in \cite{MagTap14}.

\begin{Def}
A {\em Banach Lie algebra} is a Banach space $\vM$ equipped with a continuous Lie bracket, namely a continuous, bilinear and skew-symmetric mapping $[\cdot,\cdot]:\vM\times \vM\lra\vM$ that satisfies the Jacobi identity:
\[
[x,[y,z]]+[y,[z,x]]+[z,[x,y]]=0\qquad \mbox{for all }x,y,z\in\vM.
\]
A Banach Lie algebra $\vM$ is {\em nilpotent} if there is $\nu\in\bbN$ such that whenever $x_1,x_2,\ldots,x_{\nu+1}\in\vM$, we have
\[
[[ [\cdots[[x_1,x_2],x_3]\cdots], x_\nu],x_{\nu+1}]=0\quad
\]
and there exist $y_1,y_2,\ldots,y_\nu\in\vM$ such that 
\[
[[\cdots[[y_1,y_2],y_3]\cdots],y_\nu]\neq0.
\]
The integer $\nu$ is uniquely defined and is called the {\em step of nilpotence} of $\vM$.
\end{Def}

A nilpotent Banach Lie algebra $\vM$ can be equipped with the Lie group operation
\begin{equation}\label{gopeB}
xy=x+y+\sum_{m=2}^\nu P_m(x,y)\,,
\end{equation}
which is the truncated Baker-Campbell-Hausdorff series where the Banach Lie bracket is used. 
For any $m\geq 2$, the polynomial $P_m$ above is given by {\em Dynkin's formula}
\begin{equation}\label{dynkin}
P_m(x,y)= \mbox{\scriptsize $\displaystyle \sum \frac{(-1)^{k-1}}{k}\frac{m^{-1}}{p_1!q_1!\cdots p_k!q_k!}$}  \; 
\mbox{\small $\underbrace{x\circ\cdots\circ x}_{p_1\; times}\circ\overbrace{y\circ\cdots\circ y}^{q_1\; times}\circ \cdots\circ  \underbrace{x\circ\cdots\circ x}_{p_k\; times}\circ\overbrace{y\circ\cdots\circ y}^{q_k\; times }$ }.
\end{equation}
Here we used the nonassociative product
\[
x_{i_1}\circ x_{i_2}\circ\cdots \circ x_{i_k}=[[\cdots[[x_{i_1},x_{i_2}],x_{i_3}]\cdots],x_{i_k}]
\]
and the sum is taken over the $2k$-tuples $(p_1,q_1,p_2,q_2,\ldots,p_k,q_k)$ such that $p_i+q_i\geq 1$ for all positive $i,k\in\bbN$ and $\sum_{i=1}^kp_i+q_i=m$. Note that $P_{2}$ has the simple form $P_2(x,y)=[x,y]/2$. The explicit formula for the group product will only be used in the technical Section \ref{ProofMag}.

\begin{Def}
A nilpotent Banach Lie algebra equipped with the group operation \eqref{gopeB} is called a {\em Banach nilpotent Lie group}.
\end{Def}

If $S_1,S_2,\ldots,S_n\subset\vX$ are closed subspaces of a Banach space $\vX$ such that the mapping
\[
J\colon S_1\times\cdots\times S_n\lra\vX \quad\text{ with  } \quad 
J(s_1,\ldots,s_n)=\sum_{l=1}^n s_l
\]
is a Banach isomorphism (continuous linear bijection), then we write $\vX=S_1\oplus\cdots\oplus S_n$.

\begin{Def}\label{bhg}
We say that a Banach nilpotent Lie group $\vM$ of step $\nu$ is a {\em Banach homogeneous group} if it admits a stratification. This means there exist $\nu$ closed subspaces $H_1,\ldots,H_\nu$ such that
\[\vM=H_1\oplus\cdots\oplus H_\nu,\]
where $[x,y]\in H_{i+j}$ if $x\in H_i$ and $y\in H_j$ and $i+j\leq\nu$, and $[x,y]=0$ otherwise.
\end{Def}

The stratification equips any Banach homogeneous group $\vM$ with {\em dilations} $\delta_r\colon\vM\lra\vM$, $r>0$. These are Banach isomorphisms defined by
\[
\delta_r\circ\pi_i=r^i\,\pi_i
\]
where $\pi_i:\vM\to H_i$ is the canonical projection for all $i=1,\ldots,\nu$. These dilations respect the Lie bracket and Lie group structure:
\[
[\delta_rx,\delta_ry]=\delta_r[x,y]\quad\text{ and } \quad \delta_{r}(xy)=\delta_{r}(x)\delta_{r}(y) \quad \text{ for all }x,y\in \vM, \ r>0.
\]

Throughout this article we denote by $|\cdot|$ the underlying Banach space norm on $\vM$. It can be shown there exist positive constants $\sigma_1,\ldots,\sigma_\nu$ with $\sigma_1=1$ such that $\|\cdot \|\colon \vM\to \vR$ defined by:
\begin{equation}\label{Banhom}
\|x\,\|=\max\{\sigma_i|\pi_i(x)|^{1/i}:\,1\leq i\leq\nu\}
\end{equation}
satisfies the standard properties
\[\|\delta_rx\|=r\,\|x\|\quad \mbox{ and }\quad \|xy\|\leq\|x\|+\|y\| \quad \mbox{ for all }x, y\in \vM, \ r>0.         \]
We say that $\|\cdot\|$ is a {\em Banach homogeneous norm} on $\vM$. The map $\rho(x,y)=\|x^{-1}y\|$ is a distance on $\vM$ satisfying
\begin{equation}\label{eq:dist_hom}
\rho(zx,zy)=\rho(x,y)\quad \mbox{ and }\quad \rho(\delta_rx,\delta_ry)=r\,\rho(x,y) \quad \text{ for all }x,y,z\in\vM, \ r>0.
\end{equation}
We say that $\rho$ is a {\em Banach homogeneous distance} on $\vM$. We also write $\rho(z)=\rho(z,0)$ for $z\in\vM$.

An important subclass of Banach homogeneous groups are stratified groups.

\begin{Def}\label{Def:stratified}
A \emph{stratified group} $\vG$ is a Banach homogeneous group for which the underlying Banach space is finite dimensional and whose first layer $V_1$ of the stratification $\vG=V_1\oplus\cdots\oplus V_s$ satisfies 
\[
[V_1,V_j]=V_{j+1} \text{ for }j\geq 1 \qquad \text{and} \qquad V_j=\set{0} \text{ for }j>s.
\]
As a finite dimensional Lie group, $\vG$ is equipped with a Haar measure which we denote by $\mu$.
\end{Def}
\begin{Rem}
While any stratified group is also a finite dimensional Banach homogeneous group, the converse does not hold. To see this it suffices to consider a commutative Banach homogeneous group $\vM=H_1\oplus H_2$, where  $H_1$ and $H_2$ are finite dimensional vector spaces. Clearly the commutative Lie product yields $[H_1,H_1]=\set{0}\subset H_2$, so $\vM$ cannot be a stratified group.
\end{Rem}
We can equip any stratified group $\vG$ with a {\em homogeneous distance} (i.e. a distance satisfying \eqref{eq:dist_hom}). The Haar measure $\mu$ then satisfies
\[
\mu(xA)=\mu(A) \quad \mbox{and}\quad \mu(\delta_{r}(A))=r^{Q}\mu(A) \quad \text{for measurable }A\subset \vG \text{ and }x\in \vG
\]
and
\[\mu(B(x,r))=r^{Q}\mu(B(0,1))\quad \text{ for every open ball }B(x,r)\subset \vG.\]

Fixing a basis compatible with the stratification identifies $\vG$ with $\bbR^n$ for a positive integer $n$. In these coordinates, the Haar measure is simply the $n$-dimensional Lebesgue measure $\mathcal{L}^n$ (up to constant multiplication).

From now on $\vM$ will be a Banach homogeneous group with stratification $\vM=H_1\oplus\cdots\oplus H_\nu$ equipped with the Banach homogeneous norm $\|\cdot \|$ and corresponding homogeneous distance $\rho$. We denote by $\vG$ a stratified group of step $s$ with stratification $\vG=V_1\oplus\cdots\oplus V_s$ equipped with a homogeneous distance $d$. We also use the notation $d(x)=d(x,0)$.

\subsection{Directional derivatives and differentiability}\label{sub:Direc}

For functions defined on general measurable domains we may not be able to approach each point from every direction inside the domain. We will introduce an unusual but flexible definition of directional derivative, based on the following approximation of directions inside a domain.

\begin{Def}\label{approximatedirection}
Let $A\subset \vG$ be measurable and $x\in \vG$. We say that points $\zeta_{x}^{t}\in \vG$ for $t>0$ \emph{approximate in $A$ the direction $\zeta$ at the point $x$} if
\begin{equation}\label{lim}
x\zeta_{x}^{t}\in A \mbox{ for every }t>0 \qquad \mbox{and} \qquad \frac{d(\zeta_{x}^{t}, \delta_{t}\zeta)}{t}\to 0 \quad \mbox{as}\quad t\downarrow 0.
\end{equation}
\end{Def}

\begin{Def}
Let $A\subset \vG$ be a measurable set. We say that $x\in \vG$ is a \emph{density point} of $A$ if
\[\lim_{r\downarrow 0}\frac{\mu(A\cap B(x,r))}{\mu(B(x,r))}=1.\]
We denote by $D(A)$ the set of density points of $A$.
\end{Def}

Since $\mu$ is doubling, the Lebesgue differentiation theorem applies and $\mu(A\setminus D(A))=0$ for any measurable set $A\subset \vG$. It is a standard fact that at every density point $x\in D(A)$ we have
\begin{equation}\label{eq:distAdensity}
\frac{\dist(A,y)}{d(x,y)}\to0\quad \text{as $y\to x$}.
\end{equation}

At density points of a measurable set, we can approximate in every direction.

\begin{Lem}\label{convzeta}
Suppose $A\subset \vG$ is measurable and $x\in D(A)$. Then for every $\zeta \in \vG$ there exist approximating points $\zeta_{x}^{t}$ in direction $\zeta$ at the point $x$. In addition, we can choose $\zeta_{x}^{t}$ such that the limit in \eqref{lim} is uniform for $0<d(\zeta)\leq 1$.
\end{Lem}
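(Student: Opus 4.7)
The plan is to exploit the density-point estimate \eqref{eq:distAdensity} together with the left-invariance of $d$ to transport each candidate point $\delta_t\zeta$ into the set $A\cdot x^{-1}$, with quantitative control on the displacement.

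First I would fix $x\in D(A)$ and, for each $\zeta\in\vG\setminus\{0\}$ and each $t>0$, set $y_t(\zeta):=x\,\delta_t\zeta$, observing that by homogeneity of $d$ and left-invariance we have $d(x,y_t(\zeta))=d(\delta_t\zeta,0)=t\,d(\zeta)$. Since $A$ is merely measurable (so the infimum may fail to be attained), I would select any $a_t(\zeta)\in A$ with
\[
d(a_t(\zeta),y_t(\zeta))\ \le\ 2\,\dist(A,y_t(\zeta))+t^2,
\]
and define $\zeta_x^t:=x^{-1}a_t(\zeta)$. By construction $x\,\zeta_x^t=a_t(\zeta)\in A$, which is the first half of \eqref{lim}. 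For the second half, left-invariance of $d$ gives
\[
\frac{d(\zeta_x^t,\delta_t\zeta)}{t}\ =\ \frac{d(a_t(\zeta),y_t(\zeta))}{t}\ \le\ 2\,\frac{\dist(A,y_t(\zeta))}{d(x,y_t(\zeta))}\,d(\zeta)\ +\ t.
\]
Since $y_t(\zeta)\to x$ as $t\downarrow 0$, \eqref{eq:distAdensity} forces the right-hand side to tend to $0$, proving the existence assertion.

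For the uniformity statement, I would introduce the modulus
\[
\eta(r)\ :=\ \sup\!\left\{\frac{\dist(A,y)}{d(x,y)}\ :\ 0<d(x,y)\le r\right\},
\]
and note that \eqref{eq:distAdensity} is equivalent to $\eta(r)\to 0$ as $r\downarrow 0$. For every $\zeta$ with $0<d(\zeta)\le 1$ we have $d(x,y_t(\zeta))=t\,d(\zeta)\le t$, so the previous estimate yields
\[
\frac{d(\zeta_x^t,\delta_t\zeta)}{t}\ \le\ 2\,d(\zeta)\,\eta(t)+t\ \le\ 2\,\eta(t)+t,
\]
which is independent of $\zeta$ and vanishes as $t\downarrow 0$. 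This gives the claimed uniform convergence on $\{0<d(\zeta)\le 1\}$. The case $\zeta=0$, if needed for the pointwise existence clause, is handled directly: density forces $A\cap B(x,t^2)\ne\emptyset$ for all sufficiently small $t$, and any $\zeta_x^t:=x^{-1}a_t$ with $a_t\in A\cap B(x,t^2)$ satisfies $d(\zeta_x^t,0)/t\le t$.

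I expect no serious obstacle: the only point that requires care is the possible non-attainment of $\dist(A,y_t(\zeta))$, which is resolved by the $+t^2$ slack, and the passage from the pointwise limit \eqref{eq:distAdensity} to a uniform modulus $\eta$, which is purely a matter of supremum bookkeeping. The essential ingredients — left-invariance of $d$, homogeneity under $\delta_t$, and the density-point inequality — combine almost formally to yield both the existence and the $\zeta$-uniform rate.
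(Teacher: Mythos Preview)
Your proof is correct and follows essentially the same route as the paper: choose a near-minimizer $a_t(\zeta)\in A$ of $\dist(A,x\delta_t\zeta)$ (with a small additive slack to avoid attainment issues), set $\zeta_x^t=x^{-1}a_t(\zeta)$, and invoke \eqref{eq:distAdensity}. The paper's version is terser (it uses slack $t^2 d(\zeta)$ instead of your $2\dist(\cdot)+t^2$ and leaves the uniformity implicit), but your explicit introduction of the modulus $\eta(r)$ makes the uniform estimate on $\{0<d(\zeta)\le 1\}$ cleaner and more transparent.
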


\begin{proof}
For each $t>0$, choose $y_t\in A$ such that 
\[
d(y_t,x\delta_t\zeta)<\dist(A,x\delta_t\zeta)+t^2 d(\zeta).
\]
To conclude the proof, apply \eqref{eq:distAdensity} with $y=x\delta_t\zeta$ and define $\zeta_x^t=x^{-1}y_t$.
\end{proof}

\begin{Def}\label{directionalderivative}
Let $A\subset \vG$ be measurable and $f\colon A\to \vM$. Suppose $x\in A$ and $\zeta\in \vG\setminus\set{0}$ for which there exist points which approximate in $A$ the direction $\zeta$ at $x$.

We say that $f$ is \emph{differentiable at $x$ in direction $\zeta$} with directional derivative $\partial^{+}f(x,\zeta)\in \vM$ if for any choice $t\mapsto \zeta_{x}^{t}$ of points approximating in $A$ the direction $\zeta$ at $x$, we have
\[ \lim_{t\downarrow 0} \delta_{1/t} \Big( f(x)^{-1}f(x\zeta_{x}^{t}) \Big) =  \partial^{+}f(x,\zeta).\]
\end{Def}

\begin{Pro}\label{dilatedirectional}
Assume the hypotheses of Definition~\ref{directionalderivative}. Then the direction $\delta_{a}\zeta$ can be approximated at $x$ for every $a>0$. Further, if $\partial^{+}f(x,\zeta)$ exists then also $\partial^{+}f(x,\delta_a\zeta)$ exists for every $a>0$ and we have $\partial^{+}f(x,\delta_a\zeta)=\delta_a \partial^{+}f(x,\zeta)$.
\end{Pro}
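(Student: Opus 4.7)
The plan is to reduce everything to a reparametrization of the given approximating family. The key structural observation is that dilations in $\vG$ interact with the parameter $t$ via $\delta_s(\delta_a\zeta)=\delta_{as}\zeta$, so an approximation of $\zeta$ along the scale $t$ becomes an approximation of $\delta_a\zeta$ along the scale $t/a$.

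To establish the first assertion, I would take any family $t\mapsto \zeta_x^t$ approximating $\zeta$ in $A$ at $x$ and set $\eta_x^s:=\zeta_x^{as}$ for $s>0$. Membership $x\eta_x^s\in A$ is immediate from the hypothesis. For the scale condition, I compute
\[
\frac{d\bigl(\eta_x^s,\delta_s(\delta_a\zeta)\bigr)}{s}=\frac{d(\zeta_x^{as},\delta_{as}\zeta)}{s}=a\cdot\frac{d(\zeta_x^{as},\delta_{as}\zeta)}{as},
\]
which tends to $0$ as $s\downarrow 0$ since $as\downarrow 0$. Hence $\eta_x^s$ approximates $\delta_a\zeta$ in $A$ at $x$.

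For the second assertion, recall that Definition~\ref{directionalderivative} requires the same limit for \emph{any} admissible choice of approximating points, so I need to work with a generic family. Let $s\mapsto \eta_x^s$ be any family approximating $\delta_a\zeta$ in $A$ at $x$, and define $\zeta_x^t:=\eta_x^{t/a}$ for $t>0$. Using $\delta_{t/a}(\delta_a\zeta)=\delta_t\zeta$, exactly the same computation as above (run in reverse) shows that $\zeta_x^t$ approximates $\zeta$ in $A$ at $x$. By the assumed existence of $\partial^+f(x,\zeta)$ applied to this family,
\[
\delta_{1/t}\bigl(f(x)^{-1}f(x\zeta_x^t)\bigr)\longrightarrow \partial^+f(x,\zeta)\quad\text{as $t\downarrow 0$.}
\]
Substituting $t=as$ and applying the dilation $\delta_a$ (which is continuous and satisfies $\delta_a\delta_{1/(as)}=\delta_{1/s}$), I obtain
\[
\delta_{1/s}\bigl(f(x)^{-1}f(x\eta_x^s)\bigr)=\delta_a\delta_{1/(as)}\bigl(f(x)^{-1}f(x\eta_x^s)\bigr)\longrightarrow \delta_a\partial^+f(x,\zeta)
\]
as $s\downarrow 0$. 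Since the limit is the same for every admissible $\eta_x^s$, this proves both that $\partial^+f(x,\delta_a\zeta)$ exists and that it equals $\delta_a\partial^+f(x,\zeta)$.

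There is no real obstacle here, only bookkeeping: the only subtle point is to notice that the definition of directional derivative demands the limit over \emph{every} choice of approximating points, so the argument must start from an arbitrary $\eta_x^s$ and build a corresponding $\zeta_x^t$, rather than the other way around. Continuity of $\delta_a$ and the homogeneity relation $\delta_a\delta_b=\delta_{ab}$ then close the argument.
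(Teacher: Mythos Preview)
Your proof is correct and follows essentially the same approach as the paper's: both establish the first claim by reparametrizing $\zeta_x^{t}\mapsto \zeta_x^{at}$, and then for the second claim start from an arbitrary approximating family for $\delta_a\zeta$, rescale it to obtain one for $\zeta$, and apply $\delta_a$ together with $\delta_a\delta_{1/(as)}=\delta_{1/s}$. Your exposition is in fact slightly more explicit than the paper's about why one must begin with an arbitrary $\eta_x^s$ to verify the definition.
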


\begin{proof}
Let $\zeta_{x}^{t}$ approximate the direction $\zeta$ at $x$. Then $x\zeta_{x}^{at}\in A$ and
\[\frac{d(\zeta_{x}^{at},\delta_{t}\delta_{a}\zeta)}{t}=\frac{ad(\zeta_{x}^{at},\delta_{at}\zeta)}{at}\to 0.\]
Hence $\zeta_{x}^{at}$ approximates the direction $\delta_{a}\zeta$ at $x$. 

Now suppose $\eta_{x}^{t}$ approximates the direction $\delta_{a}\zeta$ at $x$. By the above argument, $\eta_{x}^{t/a}$ approximates the direction $\zeta$ at $x$. Using differentiability in direction $\zeta$ gives
\[\lim_{t\downarrow 0} \delta_{1/t}(f(x)^{-1}f(x\eta_{x}^{t}))=\delta_{a}\lim_{t\downarrow 0} \delta_{1/t}(f(x)^{-1}f(x\eta_{x}^{t/a}))=\delta_{a}\partial^{+}f(x,\zeta).\]
Hence $ \partial^{+}f(x,\delta_{a}\zeta)=\delta_{a} \partial^{+}f(x,\zeta)$.
\end{proof}

\begin{Rem}\label{derivindependent}
Let $A\subset\vG$ be measurable, $f:A\to\vM$ be Lipschitz and $x\in A$. Suppose there exists \textbf{one} curve $\zeta_x^t\in\vG$ approximating direction $\zeta$ at $x$ with
\[ 
\lim_{t\downarrow 0} \delta_{1/t} \Big( f(x)^{-1}f(x\zeta_{x}^{t}) \Big) =  w\in\vM.
\]
Then for any other curve $\eta_x^t$ approximating direction $\zeta$ at $x$, we also have
\[
\lim_{t\downarrow 0} \delta_{1/t} \Big( f(x)^{-1}f(x\eta_{x}^{t}) \Big) =  
\lim_{t\downarrow 0} \delta_{1/t} \Big( f(x)^{-1}f(x\zeta_{x}^{t}) \Big) =  w\in\vM.
\]
In other words, $f$ is differentiable at $x$ in direction $\zeta$ and $w=\partial^{+}f(x,\zeta)$.
\end{Rem}

\begin{Rem}\label{Rem:derivopen}
In the simple case where $A\subset\vG$ is open, we may choose the simplest curve $\zeta_{x}^{t}=\delta_{t}\zeta$ for sufficiently small $t$. Hence, by Remark~\ref{derivindependent}, to verify directional differentiability
for a Lipschitz function $f:A\to\vM$, we have only to check the existence of the limit
\[ 
\lim_{t\downarrow 0} \delta_{1/t} \Big( f(x)^{-1}f(x\delta_{t}\zeta) \Big) =  \partial^{+}f(x,\zeta).
\]
\end{Rem}

Since for Lipschitz functions directional differentiability requires only one curve $\zeta_x^t$, it is useful to find those points of the domain where we have a natural choice.

\begin{Def}\label{equivDef}
Let $A\subset \vG$ and $x, \zeta\in \vG$. We say that $A$ is \emph{dense at $x$ in direction $\zeta$} if
\[
\frac{\mathcal{L}^1\pa{\{0<\theta<t:\ x\delta_{\theta} \zeta\notin A\}}}{t}\to 0 \quad \text{as }t\downarrow 0.
\]
\end{Def}

\begin{Pro}\label{Pro:fDiffDirZeta}
Suppose $A\subset\vG$ is measurable and dense at $x\in A$ in direction $\zeta$. Let $f:A\to\vM$ be Lipschitz. Then the following are true:
\begin{enumerate}
\item there exist $\zeta_{x}^{t}$ approximating in $A$ the direction $\zeta$ at $x$
\item defining $A(x,\zeta)=\{t\in\mathbb{R}\ |\ x\delta_t\zeta\in A\}$, we have $0\in\overline{A(x,\zeta)}$ 
\item $f$ is differentiable at $x$ in direction $\zeta$ if and only if the following limit exists:
\begin{align}\label{defMagnani}
\lim_{\;t\downarrow 0,\;t\in A(x, \zeta)}\delta_{1/t}(f(x)^{-1}f(x\delta_t\zeta)).
\end{align}
In either case, the above limit equals $\partial^+ f(x, \zeta)$.
\end{enumerate}
\end{Pro}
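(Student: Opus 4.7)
Item (2) is immediate: the density hypothesis says $g(t):=\mathcal{L}^1(\{0<\theta<t:x\delta_\theta\zeta\notin A\})$ is $o(t)$, so $A(x,\zeta)\cap(0,t)$ has positive measure for every sufficiently small $t>0$, which forces $0\in\overline{A(x,\zeta)}$. For item (1), the same estimate lets me localize: choosing $\varepsilon(t):=\sqrt{g(2t)/t}\to 0$, the set $A(x,\zeta)\cap(t(1-\varepsilon(t)),\,t(1+\varepsilon(t)))$ has measure at least $2t\varepsilon(t)-g(2t)=2t\varepsilon(t)-t\varepsilon(t)^2$, which is positive for small $t$. I pick $\theta_t$ in this set and set $\zeta_{x}^{t}:=\delta_{\theta_t}\zeta$. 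Then $x\zeta_{x}^{t}\in A$ and, writing $\delta_{\theta_t}\zeta=\delta_t\delta_{\theta_t/t}\zeta$, the left-invariance and homogeneity of $d$ together with the fact that $\delta_t$ is a group homomorphism of $\vG$ give $d(\zeta_{x}^{t},\delta_t\zeta)=t\, d(\delta_{\theta_t/t}\zeta,\zeta)$; since $\theta_t/t\to 1$ and $r\mapsto\delta_r\zeta$ is continuous at $r=1$, dividing by $t$ proves (1).

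For the forward implication in (3), I take the curve $\zeta_{x}^{t}$ built in (1) while arranging $\theta_t=t$ whenever $t\in A(x,\zeta)$ (allowed by the construction, since $t$ itself lies in the localization interval). Since $f$ is assumed differentiable at $x$ in direction $\zeta$, Definition~\ref{directionalderivative} applied to this particular approximating curve yields existence of the limit \eqref{defMagnani} and identifies it with $\partial^{+}f(x,\zeta)$.

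The reverse implication is the substantive part. Assume the limit \eqref{defMagnani} exists and equals $L\in\vM$, and let $\zeta_{x}^{t}$ be an arbitrary curve approximating direction $\zeta$ at $x$. Using the density argument from (1), I select $\theta_t\in A(x,\zeta)$ with $\theta_t/t\to 1$. Two estimates then close the argument. First, the Lipschitz property of $f$, the left-invariance of $\rho$, and the identity $\rho(\delta_{1/t}a,\delta_{1/t}b)=(1/t)\rho(a,b)$ yield
\begin{equation*}
\rho\bigl(\delta_{1/t}(f(x)^{-1}f(x\zeta_{x}^{t})),\;\delta_{1/t}(f(x)^{-1}f(x\delta_{\theta_t}\zeta))\bigr)\;\leq\; \mathrm{Lip}(f)\,\frac{d(\zeta_{x}^{t},\delta_{\theta_t}\zeta)}{t},
\end{equation*}
and the triangle inequality $d(\zeta_{x}^{t},\delta_{\theta_t}\zeta)\leq d(\zeta_{x}^{t},\delta_t\zeta)+d(\delta_t\zeta,\delta_{\theta_t}\zeta)$ together with the computation used for (1) gives that the right-hand side tends to $0$. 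Second, writing $\delta_{1/t}(f(x)^{-1}f(x\delta_{\theta_t}\zeta))=\delta_{\theta_t/t}\bigl(\delta_{1/\theta_t}(f(x)^{-1}f(x\delta_{\theta_t}\zeta))\bigr)$, the inner factor tends to $L$ since $\theta_t\in A(x,\zeta)$ and $\theta_t\to 0$, and $\theta_t/t\to 1$ combined with joint continuity of $(r,w)\mapsto\delta_r w$ at $r=1$ delivers convergence to $\delta_1 L=L$. Combining the two estimates forces $\delta_{1/t}(f(x)^{-1}f(x\zeta_{x}^{t}))\to L$, and Remark~\ref{derivindependent} concludes that $f$ is differentiable at $x$ in direction $\zeta$ with derivative $L$. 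The one subtle point is this last joint continuity of dilations at $r=1$, which relies on the explicit formula $\delta_r=\sum_{i=1}^{\nu} r^i\pi_i$ on $\vM=H_1\oplus\cdots\oplus H_\nu$ and continuity of the projections $\pi_i$; everything else is a direct application of the homogeneity of the relevant distances and of the Lipschitz hypothesis.
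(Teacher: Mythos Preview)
Your proof is correct and follows essentially the same approach as the paper: both constructions for (1) produce an approximating curve of the form $\zeta_x^t=\delta_{\theta_t}\zeta$ with $\theta_t\in A(x,\zeta)$ and $\theta_t/t\to 1$ (the paper picks $\theta_t$ near $\sup\{a\le t:a\in A(x,\zeta)\}$ while you use a measure argument), and for (3) both directions hinge on the factorization $\delta_{1/t}=\delta_{\theta_t/t}\circ\delta_{1/\theta_t}$ together with the Lipschitz bound on $f$. Your forward implication---setting $\theta_t=t$ when $t\in A(x,\zeta)$---is in fact slightly cleaner than the paper's multiplicative error-term argument; note also that your final appeal to Remark~\ref{derivindependent} is redundant, since you have already verified the definition for an arbitrary approximating curve.
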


\begin{proof}
For $t>0$, choose $0<T(t)\le t$ such that $x\delta_{T(t)}\zeta\in A$ and
\[
t-T(t)<\inf\big \{ t-a\colon 0<a\le t, \, x\delta_{a}\zeta \in A\big\}+t^2.
\]
Density of $A$ at $x$ in direction $\zeta$ implies $T(t)/t\to 1$. The curve defined by $\zeta_{x}^{t}=\delta_{T(t)}\zeta$ then satisfies property (1).

Property (2) is clear from the definition of directional density.

Suppose now that the limit \eqref{defMagnani} exists. Choosing the previously defined curve $\zeta_x^t$, this gives the existence of the limit
\[
\lim_{t \downarrow 0}\delta_{\frac{1}{t}}(f(x)^{-1}f(x\zeta_x^t))=\lim_{t \downarrow 0}\delta_{\frac{T(t)}t}\pa{\delta_{\frac{1}{T(t)}}(f(x)^{-1}f(x\delta_{T(t)}\zeta))}.
\]
By Remark~\ref{derivindependent}, $f$ is differentiable at $x$ in direction $\zeta$ with $\partial^+ f(x, \zeta)$ equal to the limit in \eqref{defMagnani}.

Conversely, assume that $f$ is differentiable at $x$ in direction $\zeta$. Considering again the previous curve $\zeta_x^t=\delta_{T(t)}\zeta$, it follows that
\begin{align}
\partial^+ f(x, \zeta)=\lim_{\;t\downarrow 0,\;t\in A(x, \zeta)}\delta_{1/t}(f(x)^{-1}f(x\zeta_x^t))=
\lim_{\;t\downarrow 0,\;t\in A(x, \zeta)}
\pa{\delta_{1/t}(f(x)^{-1}f(x\delta_t\zeta))}E_{x,\zeta,t},
\end{align}
where we clearly have
\[
\rho\pa{E_{x,\zeta,t}}=\rho\pa{\delta_{1/t}(f(x\delta_t\zeta)^{-1}f(x\zeta_x^t))}\le \frac{L\, d(\delta_t\zeta,\zeta_x^t)}t\to 0.
\]
This proves property (3).
\end{proof}

Recall that a Banach space $X$ has the \emph{Radon-Nikodym property (RNP)} if every Lipschitz map $f\colon [0,1]\to X$ is differentiable almost everywhere. We say that a subspace of a Banach space has the RNP if it does when considered as a Banach space in its own right. To obtain directional derivatives we use the following theorem \cite[Theorem 3.1]{MagTap14}.

\begin{Teo}\label{Teo:1D_differentiability}
Suppose $H_1\subset \vM$ has the RNP. Let $A \subset \vR$ and $\gamma \colon A\to \vM$ be a Lipschitz mapping. Then $\gamma$ is almost everywhere differentiable.
\end{Teo}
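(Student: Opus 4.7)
The plan is to extract almost everywhere differentiability of $\gamma$ from almost everywhere differentiability of its horizontal projection, which is controlled by the RNP of $H_1$, and then propagate the result layer by layer. Consider the horizontal projection $\beta:=\pi_1\circ\gamma\colon A\to H_1$. Since the Dynkin formula~\eqref{dynkin} gives $\pi_1(xy)=\pi_1(x)+\pi_1(y)$ and $\sigma_1=1$ in~\eqref{Banhom}, we have
\[
|\beta(s)-\beta(t)| = |\pi_1(\gamma(s)^{-1}\gamma(t))| \leq \|\gamma(s)^{-1}\gamma(t)\| \leq L|s-t|,
\]
so $\beta$ is $L$-Lipschitz into the Banach space $H_1$. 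Because $A\subset\vR$, extend $\beta$ to $\bar\beta\colon\vR\to H_1$ by linear interpolation on each connected component of $\vR\setminus A$, which preserves the Lipschitz constant up to a universal factor. The RNP of $H_1$ applied to $\bar\beta$, combined with the Lebesgue density theorem, produces a full-measure subset $N\subseteq A\cap D(A)$ at which $\beta$ is classically differentiable with derivative $v(x)\in H_1$.

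Next, for each $x\in N$, I would show that the limit
\[
\lim_{t\downarrow 0,\,x+t\in A}\delta_{1/t}(\gamma(x)^{-1}\gamma(x+t))
\]
exists in $\vM$; by Proposition~\ref{Pro:fDiffDirZeta} this is precisely differentiability of $\gamma$ at $x$ in direction $1$ in the sense of Definition~\ref{directionalderivative}, and the direction $-1$ is analogous. Write $w(t):=\gamma(x)^{-1}\gamma(x+t)$ and decompose along $\vM=H_1\oplus\cdots\oplus H_\nu$. The first-layer contribution $t^{-1}\pi_1(w(t))=t^{-1}(\beta(x+t)-\beta(x))$ converges to $v(x)$ by construction. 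The Lipschitz bound gives $|\pi_i(w(t))|\leq(L|t|/\sigma_i)^i$, so each higher-layer normalised term $t^{-i}\pi_i(w(t))$ is at least bounded. To upgrade boundedness to convergence I would induct on the step $\nu$ of $\vM$: the quotient by $H_\nu$ is a Banach homogeneous group of step $\nu-1$ whose first layer is again $H_1$, so the inductive hypothesis yields convergence of the normalised components in the layers $H_1,\ldots,H_{\nu-1}$. The top-layer component is then handled by expanding $\pi_\nu(w(t))$ via~\eqref{dynkin} as a polynomial in the already-controlled lower-layer increments plus a residual, and showing this residual is $o(t^\nu)$ at $x\in N$ because the lower-layer increments all admit exact first-order expansions there.

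The main obstacle is precisely this last step: Lipschitz control bounds $t^{-i}\pi_i(w(t))$ but does not by itself force its convergence, and $H_i$ for $i\geq 2$ is not assumed to have the RNP. The key observation is that $\pi_i(w(t))$ is not an arbitrary Lipschitz-type quantity of $t$ but is algebraically constrained by the lower layers through the Dynkin polynomial, so that once the lower layers converge the residual is forced to vanish to the correct order at density points of $A$.
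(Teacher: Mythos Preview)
The paper does not give its own proof of this statement: it is quoted verbatim as \cite[Theorem~3.1]{MagTap14} and used as a black box. So there is no in-paper argument to compare against; the relevant comparison is with the Magnani--Rajala proof.

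Your outline starts correctly: $\pi_1\circ\gamma$ is Lipschitz into $H_1$, the RNP yields a.e.\ differentiability of the horizontal part, and quotienting by $H_\nu$ sets up an induction on the step. The problem is the last step, and your own diagnosis of the obstacle is accurate while your proposed resolution is not. When you expand $\pi_\nu(w(t))$ via \eqref{gopeB}--\eqref{dynkin} you do \emph{not} get a polynomial in the lower-layer increments $\pi_i(w(t))$, $i<\nu$; you get
\[
\pi_\nu(w(t))=\big(\gamma_\nu(x+t)-\gamma_\nu(x)\big)+\text{(iterated brackets of }\gamma_i(x),\gamma_i(x+t),\ i<\nu\text{)},
\]
where $\gamma_i=\pi_i\circ\gamma$. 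The term $\gamma_\nu(x+t)-\gamma_\nu(x)$ is a genuine ``free'' contribution from the top layer of $\gamma$, not algebraically determined by the lower layers at the single point $x$. Knowing that $t^{-i}\pi_i(w(t))$ converges for $i<\nu$ says nothing, pointwise, about $t^{-\nu}\big(\gamma_\nu(x+t)-\gamma_\nu(x)\big)$; and $H_\nu$ need not have the RNP. Your sentence ``once the lower layers converge the residual is forced to vanish to the correct order'' is exactly the content of the theorem at step~$\nu$, not a consequence of the inductive hypothesis.

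What actually forces the top layer to behave is not a pointwise Dynkin identity but the \emph{global} Lipschitz bound combined with a telescoping argument along an interval: writing $\gamma(x)^{-1}\gamma(x+t)$ as a long product over a fine partition and using $\|\gamma(u)^{-1}\gamma(u')\|\le L|u-u'|$ on each factor shows that the higher-layer contributions are Riemann-sum expressions in the first-layer increments (a L\'evy-area/iterated-integral structure), and hence controlled a.e.\ once $\pi_1\circ\gamma$ is. This is the mechanism in \cite{MagTap14} (and already in Pansu for finite-dimensional targets). It requires the domain to be an interval, so one first extends the curve from $A$ to an interval---a nontrivial step for Banach homogeneous targets that your sketch also omits. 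Without these two ingredients (extension to an interval; telescoping/integration to tie the top layer to the first) the induction does not close.
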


\begin{Teo}\label{lineardensityderivatives}
Suppose $H_1\subset \vM$ has the RNP. Let $A\subset \vG$ be measurable and $\zeta$ be horizontal, namely $\zeta \in V_{1}\setminus\{0\}$. If $f\colon A \to \vM$ is Lipschitz, then for almost every point $x\in A$:
\begin{enumerate}
\item $A$ is dense at $x$ in direction $\zeta$,
\item $f$ is differentiable at $x$ in direction $\zeta$.
\end{enumerate}
\end{Teo}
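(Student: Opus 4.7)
The plan is to fiber $\vG$ along the one-parameter horizontal subgroup generated by $\zeta$ and reduce the statement to the one-dimensional differentiability result Theorem~\ref{Teo:1D_differentiability} and the classical Lebesgue density theorem, combined via Fubini.

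To set things up, I fix exponential coordinates adapted to the stratification identifying $\vG$ with $\bbR^n$, and complete $\zeta$ to a basis of $V_1$. Let $W\subset\vG$ be a linear complement of $\bbR\zeta$ and consider the polynomial map
\[
\Psi\colon W\times\bbR\to\vG,\qquad \Psi(w,t)=w\,\delta_t\zeta.
\]
Since $\zeta\in V_1$ and $w\in W$ has vanishing $\zeta$-coordinate, the Baker-Campbell-Hausdorff expansion shows that in the chosen coordinates $\Psi$ is a triangular polynomial diffeomorphism with unit Jacobian, so it pushes Lebesgue measure on $W\times\bbR$ to a constant multiple of the Haar measure $\mu$. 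Equivalently one could invoke the bi-invariance of $\mu$ on the nilpotent group $\vG$.

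For part~(1), set $\widetilde A=\Psi^{-1}(A)$ and $E_w=\{t\in\bbR:(w,t)\in\widetilde A\}$. By Fubini, $E_w$ is measurable for a.e.\ $w\in W$, and the Lebesgue density theorem then ensures that almost every $t\in E_w$ is a density point of $E_w$. Because $\zeta\in V_1$, the elements $\delta_t\zeta$ and $\delta_\theta\zeta$ commute, which yields
\[
w\delta_{t+\theta}\zeta = w\delta_t\zeta\cdot\delta_\theta\zeta = x\,\delta_\theta\zeta,\qquad x=w\delta_t\zeta.
\]
Right-density of $E_w$ at $t$ therefore translates verbatim into density of $A$ at $x$ in direction $\zeta$ in the sense of Definition~\ref{equivDef}, and Fubini turns the fibrewise null set into a $\mu$-null subset of $\vG$.

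For part~(2), fix $w$ with $E_w$ measurable and consider the Lipschitz curve
\[
\gamma_w\colon E_w\to\vM,\qquad \gamma_w(t)=f(w\delta_t\zeta).
\]
Horizontality of $\zeta$ and left-invariance of $d$ give $d(w\delta_s\zeta,w\delta_t\zeta)=|s-t|\,d(\zeta)$, so $\gamma_w$ is Lipschitz with constant $L\,d(\zeta)$. By Theorem~\ref{Teo:1D_differentiability} and the RNP assumption on $H_1$, the curve $\gamma_w$ is differentiable at almost every $t\in E_w$. At any $t$ which is simultaneously a density point of $E_w$ and a differentiability point of $\gamma_w$, Proposition~\ref{Pro:fDiffDirZeta}(3) identifies the one-dimensional limit for $\gamma_w$ at $t$ with the limit~\eqref{defMagnani} characterising directional differentiability of $f$ at $x=w\delta_t\zeta$, so $\partial^{+}f(x,\zeta)$ exists. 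A second Fubini transports this fibrewise a.e.\ conclusion back to $\vG$.

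The main delicacy lies in justifying these Fubini arguments: one needs the set of points in $A$ at which $f$ has a directional derivative in the sense of Definition~\ref{directionalderivative} to be $\mu$-measurable, and similarly on the fibers $E_w$. Since $f$ is defined only on the measurable set $A$ and admits no Lipschitz extension in general, this measurability is not automatic; it is precisely the issue deferred to Section~\ref{ProofMag}, on which I would rely to make the second Fubini step rigorous.
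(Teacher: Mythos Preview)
Your proof is correct and follows essentially the same approach as the paper: both fiber $\vG$ by the diffeomorphism $\Psi(w,t)=w\,\delta_t\zeta$ (the paper writes $t\zeta$, which coincides with $\delta_t\zeta$ since $\zeta\in V_1$), use Lebesgue density on the fibers for part~(1), apply Theorem~\ref{Teo:1D_differentiability} to the Lipschitz curves $t\mapsto f(w\delta_t\zeta)$ for part~(2), and defer the measurability of $D_{f,\zeta}$ to Section~\ref{Sect:Measurability}. The only cosmetic differences are that the paper cites \cite[Proposition~7.6]{Mag} for the diffeomorphism and Federer~3.1.3(5) for the density step, whereas you argue both directly.
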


\begin{proof}
Choose $\tilde V_1\subset V_1$ such that $\tilde V_1\oplus\mathrm{span}\{\zeta\}=V_1$ and consider a basis $(e_1,\ldots,e_{n-1})$ of
\[
N:=\tilde V_1\oplus V_2\oplus\cdots\oplus V_s,
\]
for which each $e_{i}$ belongs to some subspace $V_{j}$. The basis $(e_1,\ldots,e_{n-1},\zeta)$  of $\vG$ allows us to define 
$\Psi:\bbR^n\to\vG$ as follows
\[
\Psi(\xi,t)=\left(\sum_{j=1}^{n-1}\xi_j e_j\right)(t\zeta).
\]
The map $\Psi$ is a global diffeomorphism (see \cite[Proposition 7.6]{Mag}). By 3.1.3(5) of \cite{Fed69}, for a.e. $(\xi,t)\in \Psi^{-1}(A)$ the set of points 
\[
\{\theta\in\bbR: (\xi,\theta)\notin \Psi^{-1}(A)\}
\]
has density zero at $t$. Since $\zeta\in V_1$, hence $\Psi(\xi,t)=\Psi(\xi,0)\delta_t\zeta$, the previous statement implies that for a.e.\ $x=\Psi(\xi,t)\in A$, the set $A$ is dense at $x$ in direction $\zeta$. We denote the set of these points by $A_\zeta$, observing that is measurable and $\mu(A\sm A_\zeta)=0$.

Let $D_{f,\zeta}\subset A_\zeta$ be the set of points at which $f$ is differentiable in direction $\zeta$. Our proof is completed once we have $\mu(A_\zeta\setminus D_{f,\zeta})=0$, that is $\cL^n\pa{\Psi^{-1}(A_\zeta\sm D_{f,\zeta})}$. By the measurability of $D_{f,\zeta}$ (whose proof is technical and postponed to Section~\ref{Sect:Measurability}), the set $Z_{f,\zeta}=A_\zeta\sm D_{f,\zeta}$ is also measurable. 
We can then apply Fubini's theorem to the measurable set 
\[
\Psi^{-1}(Z_{f,\zeta}),
\]
whose measure can be recovered by integration of measures of the 
$1$-dimensional sections 
\[
\pa{\Psi^{-1}(Z_{f,\zeta})}_\xi=\Psi(\xi,\cdot)^{-1}(Z_{f,\zeta}).
\]
where $\xi\in\bbR^{n-1}$. The composition $t\to f(\Psi(\xi,t))\in\vM$ is Lipschitz on $\Psi(\xi,\cdot)^{-1}(A_\zeta)\subset\bbR$, therefore Theorem~\ref{Teo:1D_differentiability} provides its a.e.\ differentiability on $\Psi(\xi,\cdot)^{-1}(A_\zeta)\subset\bbR$. It follows that $\Psi(\xi,\cdot)^{-1}(Z_{f,\zeta})$ is negligible, hence so is $\Psi^{-1}(Z_{f,\zeta})$.
\end{proof}

\begin{Def}\label{h-hom}
A \emph{homogeneous homomorphism}, in short \emph{h-homomorphism}, from $\vG$ to $\vM$ is a map $L\colon \vG\to \vM$ such that $L(xy)=L(x)L(y)$ and $L(\delta_{r}(x))=\delta_r(L(x))$ for all $x,y\in \vG$ and $r>0$.
\end{Def}


Any h-homomorphism is automatically Lipschitz. Indeed, since $\vG$ is stratified there exist a positive integer $N$ and 
$v_1,\ldots, v_N\in V_1$ such that for some $T>0$ the set
\[
V:=\{\delta_{t_{1}}v_1\cdots \delta_{t_N}v_N\colon |t_i|<T\}
\]
contains the unit ball. The h-homomorphism property then yields the Lipschitz continuity. The reader may also see \cite[Proposition 3.11]{Mag01}, which is stated there for stratified group targets, but works equally well for Banach group targets.

\begin{Def}\label{def:hdiff}\rm 
Let $A\subset\vG$ and $x\in A\cap D(A)$. We say that $f\colon A\lra \vM$ is {\em h-differentiable} at $x$, or simply {\em differentiable} at $x$, if there exists a h-homomorphism $L\colon\vG\lra\vM$ such that
\[ \frac{\rho(f(x)^{-1}f(xz),L(z))}{d(z)}\to 0 \qquad \mbox{as $z\in x^{-1}A$ and $d(z)\downarrow 0$}.\]
The mapping $L$ is unique and called the \emph{h-differential} of $f$ at $x$.
\end{Def}

\subsection{Porous sets}

We now define porous sets and $\sigma$-porous sets.

\begin{Def}\label{def_porous}
Let $(M, \rho)$ be a metric space, $E\subset M$ and $a\in M$. We say that  $E$ is \emph{porous at $a$} if there exist $\Lambda>0$ and a sequence $x_{n}\to a$ such that
\[B(x_n,\Lambda \rho(a,x_n))\cap E=\emptyset \qquad \mbox{for every $n\in \mathbb{N}$}.\] 

A set $E$ is \emph{porous} if it is porous at each point $a\in E$ with $\Lambda$ independent of $a$. A set is \emph{$\sigma$-porous} if it is a countable union of porous sets. 
\end{Def}

Porous sets in stratified groups have measure zero. This follows from the fact that Haar measure on stratified groups is Ahlfors regular, hence doubling, so the Lebesgue differentiation theorem applies \cite[Theorem 1.8]{Hei01}.

\section{Directional derivatives act as h-homomorphisms outside a $\sigma$-porous set}\label{subsets}
In this section we study applications of porosity to directional derivatives. Recall that $\vG$ is a stratified group of step $s$ and $\vM$ is a Banach homogeneous group of step $\nu$. We will need the following estimate for the Banach homogeneous distance in $\vM$.

\begin{Lem}\label{estMagn}
Let $N\in\N$ and $A_j,B_j\in\vM$ for $j=1,\ldots,N$. Suppose there exists $b>0$ such that 
\[
\rho(B_jB_{j+1}\cdots B_N)\leq b\quad\text{ and }\quad \rho(A_j,B_j)\leq b \quad \mbox{for }j=1,\ldots,N.
\]
Then there exists $C_b>0$, depending on $b$, such that
\begin{eqnarray}\label{estimprod}
\rho\big(A_1A_2\cdots A_N,B_1B_2\cdots B_N\big)\leq C_b
\sum_{j=1}^N \rho (A_j,B_j)^{1/\nu}.
\end{eqnarray}
\end{Lem}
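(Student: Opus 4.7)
The plan is to reduce, via a telescoping argument along the product, to a single conjugation estimate and then exploit the polynomial structure of the truncated Baker--Campbell--Hausdorff product together with the definition \eqref{Banhom} of the Banach homogeneous norm to extract the exponent $1/\nu$.

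First I would telescope: by the triangle inequality and the left invariance of $\rho$,
\[
\rho\big(A_1\cdots A_N,B_1\cdots B_N\big)\le \sum_{j=1}^N \rho\big(A_1\cdots A_{j-1}A_jB_{j+1}\cdots B_N,\,A_1\cdots A_{j-1}B_jB_{j+1}\cdots B_N\big)=\sum_{j=1}^N \|U_j^{-1}x_jU_j\|,
\]
where I set $U_j:=B_{j+1}\cdots B_N$ (with $U_N:=0$) and $x_j:=A_j^{-1}B_j$. By hypothesis $\|U_j\|\le b$ and $\|x_j\|=\rho(A_j,B_j)\le b$, so the lemma follows once I establish an estimate of the form
\[
\|U^{-1}xU\|\le C_b\,\|x\|^{1/\nu} \qquad \text{whenever } \|U\|\le b,\ \|x\|\le b.
\]

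To prove this conjugation estimate, I would regard $U$ and $x$ as elements of the underlying Banach Lie algebra and expand the product via \eqref{gopeB}--\eqref{dynkin}. Since the Lie bracket respects the stratification ($[H_a,H_b]\subset H_{a+b}$, zero if $a+b>\nu$), the $j$-th component $\pi_j(U^{-1}xU)$ is a polynomial in the components of $U$ and $x$ in which each monomial is linear in exactly one factor $\pi_i(x)$ with $i\le j$, the remaining factors being components of $U$, and the indices summing to $j$. Combining this observation with the componentwise bound $|\pi_i(x)|\le (\|x\|/\sigma_i)^i\le C(b)\|x\|$ (valid because $\|x\|\le b$) and $|\pi_a(U)|\le C(b)$, one obtains
\[
|\pi_j(U^{-1}xU)|\le C'(b)\,\|x\| \qquad \text{for every } j=1,\ldots,\nu.
\]
Taking the $j$-th root, multiplying by $\sigma_j$, and passing to the maximum in \eqref{Banhom} gives $\|U^{-1}xU\|\le C''(b)\max_{1\le j\le\nu}\|x\|^{1/j}$. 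Splitting according to whether $\|x\|\le 1$ (in which case $\|x\|^{1/j}\le\|x\|^{1/\nu}$ for every $j\le\nu$) or $1<\|x\|\le b$ (in which case one simply controls $\|U^{-1}xU\|$ by a constant depending on $b$ and compares with $\|x\|^{1/\nu}\ge b^{-1/\nu}\|x\|$) yields the desired bound with a constant $C_b$ depending only on $b$.

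I expect the main technical obstacle to be a careful bookkeeping of the polynomial $Q_j$ giving $\pi_j(U^{-1}xU)$: one must check that in Dynkin's expansion of $U^{-1}\cdot x\cdot U$, after cancellations, every monomial contributing to $\pi_j$ is indeed linear in some $\pi_i(x)$ with $i\le j$ and has all its $U$-indices summing to $j-i$. Once this structural fact is in place, the homogeneous-norm estimate is routine, and the telescoping step described above immediately yields \eqref{estimprod}.
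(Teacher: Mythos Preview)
Your proposal is correct and follows essentially the same two-step strategy as the paper: a telescoping reduction to a single conjugation term $\|U_j^{-1}x_jU_j\|$ (this is the paper's Claim~3, written there iteratively rather than as a sum), followed by the conjugation estimate $\|y^{-1}xy\|\le C_b\|x\|^{1/\nu}$ on bounded sets (the paper's Claim~2). The only notable difference is in how the conjugation estimate is argued: the paper expands $y^{-1}xy$ by applying BCH twice and tracks a factor of $|x|$ through the iterated brackets by hand (working with the Banach norm $|\cdot|$ and converting to $\|\cdot\|$ at the end), whereas you invoke the structural fact that conjugation is the adjoint action $e^{-\mathrm{ad}\,U}(x)$, hence linear in $x$, which makes the componentwise bound $|\pi_j(U^{-1}xU)|\le C(b)\|x\|$ immediate; your route is cleaner once that linearity is justified, and the ``main technical obstacle'' you flag is resolved precisely by the identity $\exp(Y)\exp(X)\exp(-Y)=\exp(e^{\mathrm{ad}\,Y}X)$, which in this setting (where $\exp=\mathrm{id}$) gives the adjoint formula directly.
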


The finite dimensional version of Lemma \ref{estMagn} can be found in \cite[Lemma 3.7]{Mag}. The proof works in a similar way for Banach homogeneous groups. However, due to the key role played by Lemma~\ref{estMagn} in our arguments,
we will present its proof in Section~\ref{Sect:estMagn}. Since any stratified group is a Banach homogeneous group, we can also apply Lemma \ref{estMagn} in $\vG$, replacing $\nu$ by $s$ and $\rho$ by $d$.

We will also need the following estimate for distances in stratified groups \cite[Lemma 2.13]{FS}. 

\begin{Lem}\label{stimagroup}
There is a constant $D>0$ such that
\begin{align*}
d(x^{-1}yx) \leq D\,\Big(d(y)+ d(x)^{\frac{1}{s}}d(y)^{\frac{s-1}{s}}+d(x)^{\frac{s-1}{s}}d(y)^{\frac{1}{s}}\Big)\quad \mbox{for }x,y\in\vG.
\end{align*}
\end{Lem}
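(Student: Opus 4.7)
The plan is to unfold the conjugation $x^{-1}yx$ using the Baker–Campbell–Hausdorff formula available in the nilpotent group $\vG$, project onto each layer $V_j$ of the stratification, and then compare with the homogeneous norm via the estimate $d(z)\asymp\max_j |\pi_j(z)|^{1/j}$ (with $|\cdot|$ any fixed norm on each $V_j$). Concretely, since $\vG$ has step $s$, iterated brackets of length $>s$ vanish, so
\[
x^{-1}yx = y+\sum_{k=1}^{s-1}\frac{(-1)^k}{k!}(\mathrm{ad}\,x)^k(y),
\]
and expanding multilinearly in the graded components $x=\sum_i x_i$, $y=\sum_l y_l$ with $x_i,y_l\in V_i,V_l$, each summand $[x_{i_1},[x_{i_2},\ldots,[x_{i_k},y_l]\ldots]]$ lands in $V_{i_1+\cdots+i_k+l}$ by the grading property $[V_i,V_l]\subset V_{i+l}$.

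Fixing $1\le j\le s$ and collecting all terms that contribute to $\pi_j(x^{-1}yx)$, together with the layer-wise bounds $|x_i|\le C\,d(x)^i$ and $|y_l|\le C\,d(y)^l$ (which follow from the equivalence of homogeneous norms), yields
\[
|\pi_j(x^{-1}yx)|\le C\Big(d(y)^j+\sum_{l=1}^{j-1} d(x)^{\,j-l}\,d(y)^{\,l}\Big),
\]
since the constraint $i_1+\cdots+i_k+l=j$ with $k\ge 1$ forces $1\le l\le j-1$. Taking $1/j$-th roots and summing over $j$ gives
\[
d(x^{-1}yx)\le C\,\Big(d(y)+\sum_{j=2}^{s}\sum_{l=1}^{j-1} d(x)^{(j-l)/j}\,d(y)^{l/j}\Big).
\]

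The last step is an elementary interpolation. For any pair of exponents $\alpha=(j-l)/j$, $\beta=l/j$ arising above one has $\alpha+\beta=1$ and $1/s\le\alpha,\beta\le(s-1)/s$ (using $1\le l\le j-1$ and $j\le s$). Writing $d(x)^\alpha d(y)^\beta=d(y)\cdot\bigl(d(x)/d(y)\bigr)^\alpha$ (say when $d(y)>0$; the other case is trivial), one checks that when $d(x)\le d(y)$ the ratio is minimised by choosing $\alpha$ as small as possible, giving $d(x)^\alpha d(y)^\beta\le d(x)^{1/s}d(y)^{(s-1)/s}$, and when $d(x)\ge d(y)$ one chooses $\alpha$ as large as possible, giving $d(x)^\alpha d(y)^\beta\le d(x)^{(s-1)/s}d(y)^{1/s}$. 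Either way the mixed term is controlled by the sum of the two extreme terms, and after absorbing the finitely many combinatorial constants into a single $D>0$ we obtain the claimed inequality.

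The main obstacle is really bookkeeping rather than conceptual: one must keep precise track of the combinatorial structure of the BCH series so that each iterated bracket is unambiguously assigned to a layer $V_j$ via the grading, and one must then verify the interpolation step for every admissible pair $(\alpha,\beta)$ with $\alpha+\beta=1$ appearing in the expansion, checking that the extreme exponents $1/s$ and $(s-1)/s$ genuinely dominate them regardless of the relative size of $d(x)$ and $d(y)$.
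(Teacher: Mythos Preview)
Your argument is correct. The paper does not actually supply a proof of this lemma: it is quoted from \cite[Lemma~2.13]{FS} (Franchi--Serapioni) without further justification, so there is no in-paper proof to compare against. Your approach --- expanding the conjugation via the identity $x^{-1}yx=e^{-\mathrm{ad}\,x}(y)$ valid in exponential coordinates, projecting onto layers using the grading $[V_i,V_l]\subset V_{i+l}$, and then interpolating the mixed exponents --- is the standard route and is how such estimates are typically established in the stratified-group literature.

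One minor verbal slip: in the interpolation step, when $d(x)\le d(y)$ the quantity $\bigl(d(x)/d(y)\bigr)^{\alpha}$ is \emph{maximised} (not minimised) at the smallest admissible $\alpha=1/s$, since the base is at most~$1$; it is this maximum that yields your upper bound $d(x)^{\alpha}d(y)^{\beta}\le d(x)^{1/s}d(y)^{(s-1)/s}$. The symmetric remark applies in the case $d(x)\ge d(y)$. The stated inequalities are correct, so this is only a wording issue and does not affect the argument.
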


Unless otherwise stated, we denote by $f\colon A\to \vM$ a fixed Lipschitz function on a measurable set $A\subset\vG$ with Lipschitz constant $L$.

\begin{Def}\label{Def:P}
Fix $\zeta, \eta \in \vG$, $y,z\in \vM$ and $\varepsilon, \delta>0$. Let $C_{1}$ and $C_{2}$ respectively be the constants obtained from applying Lemma~\ref{estMagn} in $\vM$ with $b=\max\{\varepsilon,\ \rho(y)+\rho(z)\}$ and in $\vG$ with $b=\max\{\varepsilon,\ \varepsilon/L,\ d(\zeta)+d(\eta)\}$.

We define $P(\zeta, \eta, y, z, \varepsilon, \delta,A)$ to be the set of points $p\in A$ for which the following properties hold:
\begin{itemize}
\item For all $0<t<\delta$ there exist $\zeta_p^t, \eta_p^t \in \vG$ satisfying
\begin{equation}\label{A1}
p\zeta_p^t\in A \quad \mbox{and} \quad d(\zeta_p^t, \delta_t \zeta)<\varepsilon t ,
\end{equation}
\begin{equation}\label{A2}
p\eta_p^t\in A \quad \mbox{and}\quad d(\eta_p^t, \delta_t \eta)<\varepsilon t,
\end{equation}
\begin{equation}\label{A3}
\rho(f(p)^{-1}f(p \zeta_p^t),\delta_t y)\leq \varepsilon t,
\end{equation}
\begin{equation}\label{A4}\rho(f(p)^{-1}f(p\eta_p^t), \delta_t z)\leq \varepsilon t.
\end{equation}
\item For arbitrarily small $t$ there exist $\omega_p^t\in \vG$ such that
\begin{equation}\label{badt}
p\omega_p^t\in A \quad \mbox{and} \quad d(\omega_p^t, \delta_t (\zeta\eta))<\varepsilon t, 
\end{equation}
\begin{equation}\label{badt2}
\rho(f(p)^{-1} f(p\omega_p^t),\delta_t (yz))> \left( 3C_{1}\varepsilon^{1/\nu}+ L\varepsilon +  LC_{2} \left(2+L^{-1/s}\right)\varepsilon^{1/s} \right)t.
\end{equation}
\end{itemize}
\end{Def}

Intuitively, $P(\zeta, \eta, y, z, \varepsilon, \delta,A)$ consists of points for which the directional derivatives in direction $\zeta$ and $\eta$ look like $y$ and $z$ respectively, at scales less than $\delta$ and with accuracy $\varepsilon$.

Notice that provided $\varepsilon$ and $\delta$ are bounded by some fixed constant $K$, the constants $C_{1}$ and $C_{2}$ in Definition~\ref{Def:P} are bounded by constants independent of the precise value of $\varepsilon$ and $\delta$.

\begin{Lem}\label{lemmageneral}
The set $P(\zeta, \eta, y, z, \varepsilon, \delta,A)$ is porous.
\end{Lem}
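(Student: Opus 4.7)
Plan. I would prove porosity at every $p\in P=P(\zeta,\eta,y,z,\varepsilon,\delta,A)$ with the same constant. Since condition \eqref{badt}--\eqref{badt2} only holds for arbitrarily small scales, I fix $p\in P$, choose $t_n\downarrow 0$ for which $\omega_p^{t_n}$ with \eqref{badt}--\eqref{badt2} exists, and set $p_n:=p\zeta_p^{t_n}$. Then \eqref{A1} forces $d(p,p_n)\le(\varepsilon+d(\zeta))t_n$, so it suffices to show the balls $B(p_n,\Lambda_0 t_n)$ miss $P$ for a uniform $\Lambda_0$: the porosity constant will then be $\Lambda=\Lambda_0/(\varepsilon+d(\zeta))$. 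Assume toward contradiction that $q\in B(p_n,\Lambda_0 t_n)\cap P$. Since $t_n<\delta$ and $q\in P$, \eqref{A2} and \eqref{A4} produce $\eta_q^{t_n}$ with $q\eta_q^{t_n}\in A$, $d(\eta_q^{t_n},\delta_{t_n}\eta)<\varepsilon t_n$, and $\rho(f(q)^{-1}f(q\eta_q^{t_n}),\delta_{t_n}z)\le\varepsilon t_n$. The idea is to estimate $f(p)^{-1}f(p\omega_p^{t_n})$ in two ways: directly via \eqref{badt2} gives a lower bound, while the detour $p\to p_n\to q\to q\eta_q^{t_n}$ produces an upper bound; the two will be inconsistent for $\Lambda_0$ small.

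For the source estimate, using left-invariance I would write $p^{-1}q\eta_q^{t_n}=\zeta_p^{t_n}\,\xi_n\,\eta_q^{t_n}$ with $\xi_n:=p_n^{-1}q$ of size $d(\xi_n)<\Lambda_0 t_n$. To obtain the correct $t_n$-scaling I apply Lemma~\ref{estMagn} in $\vG$ not directly but to the $\delta_{1/t_n}$-dilated factors, against targets $\zeta,0,\eta$. Provided $\Lambda_0\le\varepsilon/L$, the relevant bound $b$ is precisely the one used to define $C_2$, so
\[
d(p\omega_p^{t_n},q\eta_q^{t_n})\le t_n\bigl[C_2(2\varepsilon^{1/s}+\Lambda_0^{1/s})+\varepsilon\bigr],
\]
and the Lipschitz property of $f$ multiplies this by $L$. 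For the target estimate I would decompose
\[
f(p)^{-1}f(q\eta_q^{t_n})=\bigl[f(p)^{-1}f(p_n)\bigr]\bigl[f(p_n)^{-1}f(q)\bigr]\bigl[f(q)^{-1}f(q\eta_q^{t_n})\bigr],
\]
whose three factors lie within $\varepsilon t_n$, $L\Lambda_0 t_n$, and $\varepsilon t_n$ of $\delta_{t_n}y$, $0$, and $\delta_{t_n}z$ respectively. After dilating by $\delta_{1/t_n}$, the bound $b=\max\{\varepsilon,\rho(y)+\rho(z)\}$ used for $C_1$ is again sufficient (since $L\Lambda_0\le\varepsilon$), and Lemma~\ref{estMagn} in $\vM$ yields
\[
\rho\bigl(f(p)^{-1}f(q\eta_q^{t_n}),\delta_{t_n}(yz)\bigr)\le C_1 t_n\bigl[2\varepsilon^{1/\nu}+(L\Lambda_0)^{1/\nu}\bigr].
\]

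Adding the two estimates by the triangle inequality and comparing with \eqref{badt2} after dividing by $t_n$ gives
\[
C_1\varepsilon^{1/\nu}+C_2 L^{1-1/s}\varepsilon^{1/s}<C_1(L\Lambda_0)^{1/\nu}+LC_2\Lambda_0^{1/s},
\]
the $L\varepsilon$ and $3C_1\varepsilon^{1/\nu}+LC_2(2+L^{-1/s})\varepsilon^{1/s}$ terms in \eqref{badt2} having been calibrated exactly so that this inequality must hold. Choosing for instance $\Lambda_0=\varepsilon/(2L)$ makes each term on the right strictly smaller than the corresponding term on the left, a contradiction. Hence $B(p_n,\Lambda_0 t_n)\cap P=\emptyset$ with the uniform constant, proving porosity. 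The main technical obstacle is the scaling: Lemma~\ref{estMagn} is not homogeneous, so I must pre-dilate by $\delta_{1/t_n}$ to get the proper $t_n$ factor; this is also why the thresholds in \eqref{badt2} carry the two different exponents $1/\nu$ and $1/s$ coming from the steps of $\vM$ and $\vG$, and why the constant $\Lambda_0$ can be chosen in terms of $\varepsilon,L$ alone, independently of $p\in P$.
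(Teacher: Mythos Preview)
Your proof is correct and follows essentially the same approach as the paper: move from $p$ to $p\zeta_p^{t_n}$, then to a nearby point $q\in P$, then to $q\eta_q^{t_n}$, and apply Lemma~\ref{estMagn} once in $\vG$ (to control $d(p^{-1}q\eta_q^{t_n},\delta_{t_n}(\zeta\eta))$) and once in $\vM$ (to control $\rho(f(p)^{-1}f(q\eta_q^{t_n}),\delta_{t_n}(yz))$), obtaining an upper bound for $\rho(f(p)^{-1}f(p\omega_p^{t_n}),\delta_{t_n}(yz))$ that contradicts \eqref{badt2}. The only cosmetic difference is that the paper takes the ball radius to be exactly $\varepsilon t/L$, so the resulting upper bound equals the threshold in \eqref{badt2} and the strict inequality there yields the contradiction directly, whereas you keep $\Lambda_0$ as a parameter and finally set $\Lambda_0=\varepsilon/(2L)$ to make the right-hand side of your derived inequality strictly smaller than the left.
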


\begin{proof}
For this proof we abbreviate $P=P(\zeta,\eta,y,z,\varepsilon,\delta,A)$. Let $x\in P$ and choose $0<t<\delta/2$ for which there exist $\omega_{x}^{t}\in \vG$ satisfying \eqref{badt} and \eqref{badt2}. Fix also $\zeta_{x}^{t}$ satisfying \eqref{A1} and \eqref{A3}. Since $x$ was any element of $P$ and $t$ could be chosen arbitrarily small, to prove $P$ is porous it suffices to show that $B(x\zeta_x^t,\varepsilon t/L)\cap P=\emptyset$. 

We suppose $B(x\zeta_x^t,\varepsilon t/L)\cap P\neq \emptyset$ and deduce a contradiction. Choose $p\in B(x\zeta_x^t,\varepsilon t/L)\cap P$. Use the definition of $P$ to choose $\eta_p^t\in\vG$
satisfying \eqref{A2} and \eqref{A4}. We first estimate
\[
\rho(f(x)^{-1}f(p \eta_p^t), \delta_t(yz)).
\]
To this end, observe we can write
\[\delta_{1/t}(f(x)^{-1}f(p \eta_p^t))=A_1 A_2 A_3, \qquad \quad  yz= B_1B_2B_3,\]
where
\[A_1=\delta_{1/t}(f(x)^{-1} f(x\zeta_x^t)), \quad A_2=\delta_{1/t}(f(x\zeta_x^t)^{-1} f(p)), \quad A_3=\delta_{1/t}(f(p)^{-1} f(p\eta_p^t)),\]
and
\[B_1=y,\quad B_2=0,\quad B_3=z.\]
We check $A_i, B_i$ satisfy the hypotheses of Lemma~\ref{estMagn} with $b=\max\{\varepsilon,\ \rho(y)+\rho(z)\}$. Using \eqref{A3} gives
\begin{align*}
\rho(A_1, B_1)&= \rho(\delta_{1/t}(f(x)^{-1} f(x\zeta_x^t)), y)= \frac{1}{t}\rho(f(x)^{-1} f(x\zeta_x^t), \delta_t y)\leq \varepsilon.
\end{align*}
Recalling that $p\in B(x\zeta_x^t,\varepsilon t/L)$, we obtain
\begin{align*}
\rho(A_2,B_2)&=\rho(\delta_{1/t}(f(x\zeta_x^t)^{-1} f(p)), 0)=\frac{1}{t}\rho( f(p),f(x\zeta_x^t))\leq \frac{L}{t} d(x\zeta_x^t, p)\leq \varepsilon.
\end{align*}
Using \eqref{A4} also gives
\begin{align*}
\rho(A_3, B_3)=\rho(\delta_{1/t}(f(p)^{-1} f(p\eta_p^t)), z)= \frac{1}{t}\rho(f(p)^{-1} f(p\eta_p^t), \delta_t z)\leq \varepsilon.
\end{align*}
Hence $\rho(A_i, B_i)\leq \varepsilon$ for each $i$. Clearly also $\rho(B_i B_{i+1} \cdots B_3)\leq \rho(y)+\rho(z)$ for each $i$. Hence the hypotheses of Lemma~\ref{estMagn} are satisfied for our choice of $b$ and we get
\begin{equation}\label{eqomega}
\rho(f(x)^{-1}f(p \eta_p^t), \delta_t(yz))\leq C_{1}t\sum_{i=1}^3 d(A_i, B_i)^{1/\nu} \leq 3C_{1}\varepsilon^{1/\nu} t.
\end{equation}

Now let $\theta_{x}^t=x^{-1}p\eta_p^t$. Then $x\theta_{x}^t=p\eta_p^t\in A$ and \eqref{eqomega} gives
\begin{equation}\label{Jan19}
\rho(f(x)^{-1}f(x\theta_{x}^t), \delta_t(yz))\leq 3C_{1}\varepsilon^{1/\nu} t.
\end{equation}
Now we estimate $d(\theta_{x}^t, \delta_t (\zeta \eta))$. Since $p\in B(x\zeta_x^t, \varepsilon t/L)$, we may choose $h^t\in \vG$ with $d(h^t)\leq \varepsilon t/L$ and $p=x\zeta_x^t h^t$. Therefore
\[d(\theta_{x}^t, \delta_t (\zeta \eta))=d(x^{-1}p\eta_p^t, \delta_t (\zeta \eta))=d(\zeta_x^t h^t\eta_p^t, \delta_t (\zeta \eta)).\]
Now define
\[A_1=\delta_{1/t}\zeta_x^t, \qquad A_2=\delta_{1/t}h^t, \qquad A_3=\delta_{1/t}\eta_p^t,\]
and
\[B_1=\zeta, \qquad B_2=0, \qquad B_3=\eta.\]
Let $b=\max\{\varepsilon,\ \varepsilon/L,\ d(\zeta)+d(\eta)\}$. Using \eqref{A1} and \eqref{A2} shows $d(A_{1},B_{1})$ and $d(A_{3},B_{3})$ are bounded by $\varepsilon$. Using $d(h^t)\leq \varepsilon t/L$ gives $d(A_{2},B_{2})\leq \varepsilon/L$. Clearly $d(B_{i}B_{i+1}\cdots B_{3})\leq d(\zeta)+d(\eta)$ for each $i$. Hence the hypotheses of Lemma~\ref{estMagn} are satisfied, giving
\begin{align*}
d(\theta_{x}^t,  \delta_t(\zeta \eta))\leq C_{2} t\sum_{i=1}^3 \rho(A_i, B_i)^{1/s}
\leq C_{2} \left(2+L^{-1/s}\right)\varepsilon^{1/s}t.
\end{align*}
Recall $\omega_{x}^{t}$ were chosen satisfying \eqref{badt} and \eqref{badt2}. Using also \eqref{Jan19}, we estimate as follows:
\begin{align*}
\rho(f(x)^{-1}f(x \omega_{x}^t), \delta_t(yz))&\leq \rho(f(x)^{-1}f(x \theta_{x}^t), \delta_t(yz))+\rho(f(x \omega_{x}^t), f(x \theta_{x}^t))\\
&\leq 3C_{1}\varepsilon^{1/\nu} t + Ld(\omega_{x}^t, \theta_{x}^t)\\
&\leq 3C_{1}\varepsilon^{1/\nu} t + L\left(d(\omega_{x}^t, \delta_t(\zeta\eta))+ d(\theta_{x}^t, \delta_t(\zeta\eta))\right)\\
&\leq 3C_{1}\varepsilon^{1/\nu} t+ L\varepsilon t + LC_{2} \left(2+L^{-1/s}\right)\varepsilon^{1/s}t \\
&= \left( 3C_{1}\varepsilon^{1/\nu}+ L\varepsilon + LC_{2} \left(2+L^{-1/s}\right)\varepsilon^{1/s} \right)t.
\end{align*}
This contradicts the choice of $\omega_{x}^{t}$, forcing us to conclude $B(x\zeta_x^t,\varepsilon t/L)\cap P=\emptyset$. Hence $P$ is porous, which concludes the proof.
\end{proof}

We now prove Theorem \ref{generalcase} by putting together countably many of the sets from Definition~\ref{Def:P}.

\begin{proof}[Proof of Theorem \ref{generalcase}]
Using Theorem~\ref{Teo:separable_image} we can assume that the Banach homogeneous group $\vM$ is separable. Let $P$ be the countable union of sets $P(\zeta, \eta, y, z, \varepsilon, \delta,A)$ for $\zeta, \eta$ in a countable dense subset $\cN_{\vG}$ of $\vG$, $y,z$ in a countable dense subset $\cN_{\vM}$ of $\vM$, and $\varepsilon, \delta$ positive rationals. By Proposition~\ref{dilatedirectional}, it is enough to show that for points $x\in A\cap D(A) \setminus P$ we have
\[
\partial^{+}f(x,\zeta \eta)=\partial^{+}f(x,\zeta)\partial^{+}f(x,\eta)
\]
whenever $\zeta, \eta \in \vG$ for which $ \partial^{+}f(x,\zeta), \partial^{+}f(x,\eta)$ exist.

Suppose $x\in A\cap D(A)\setminus P$ and $\partial^{+}f(x,\zeta), \partial^{+}f(x, \eta)$ exist for some $\zeta, \eta \in \vG$. Fix $\varepsilon>0$ rational. Choose $\zeta_{x}^{t}, \eta_{x}^{t}\in \vG$ for $t>0$ with $x\zeta_{x}^{t}, x\eta_{x}^{t}\in A$ and $d(\zeta_{x}^{t},\delta_{t}\zeta)/t, d(\eta_{x}^{t},\delta_{t}\eta)/t \to 0$. Using the existence of $\partial^{+}f(x,\zeta), \partial^{+}f(x,\eta)$, choose $\delta>0$ rational such that for $0<t<\delta$:
\[d(\zeta_{x}^{t},\delta_{t}\zeta)< \varepsilon t/2 \quad \mbox{and} \quad \rho( \delta_{1/t}(f(x)^{-1}f(x\zeta_{x}^{t})), \partial^{+}f(x,\zeta))\leq  \varepsilon/2,\]
\[d(\eta_{x}^{t},\delta_{t}\eta)< \varepsilon t /2 \quad \mbox{and} \quad \rho( \delta_{1/t}(f(x)^{-1}f(x\eta_{x}^{t})), \partial^{+}f(x,\eta))\leq \varepsilon/2.\]
Now choose $y, z\in \cN_{\vM}$ such that
\begin{equation}\label{eq:estimatesC_1varepsilon}
 \rho( \partial^{+}f(x,\zeta), y)<\varepsilon/2, \quad \rho( \partial^{+}f(x,\eta), z)<\varepsilon/2, \quad \rho( \partial^{+}f(x,\zeta)\,\partial^{+}f(x,\eta), yz)<C_{1}\varepsilon^{1/\nu}. 
\end{equation}
In particular, we have
\begin{equation}\label{Jan19.1}
\rho( \delta_{1/t}(f(x)^{-1}f(x\zeta_{x}^{t})), y)\leq \varepsilon \quad \mbox{and} \quad \rho( \delta_{1/t}(f(x)^{-1}f(x\eta_{x}^{t})), z)\leq \varepsilon \quad \mbox{ for }0<t<\delta.
\end{equation}
Now choose $\zeta', \eta'\in\cN_{\vG}$ such that $d(\zeta, \zeta')<\varepsilon /2$, $d(\eta, \eta')<\varepsilon /2$ and $d(\zeta\eta, \zeta'\eta')<\varepsilon/2$.
Then
\begin{equation}\label{Jan19.2}
d(\zeta_{x}^{t},\delta_{t}\zeta')<\varepsilon t \quad \mbox{and} \quad d(\eta_{x}^{t},\delta_{t}\eta')<\varepsilon t \quad \mbox{ for }0<t<\delta.
\end{equation}
Equations \eqref{Jan19.1} and \eqref{Jan19.2} show that the point $x$ satisfies the first four conditions of the set $P(\zeta',\eta',y,z,\varepsilon,\delta,A)$, i.e. \eqref{A1}--\eqref{A4} hold with $\zeta, \eta$ replaced by $\zeta', \eta'$
and $p$ replaced by $x$. 

Since $x\notin P\supset P(\zeta',\eta',y,z,\varepsilon,\delta,A)$, the analogue of \eqref{badt} or \eqref{badt2} for the set $P(\zeta',\eta',y,z,\varepsilon,\delta,A)$ must fail for sufficiently small $t$. Hence whenever $t$ is sufficiently small and $\omega_{x}^{t}$ satisfy 
the two conditions 
\begin{equation}\label{eq:xtA}
x\omega_{x}^t\in A \quad\text{ and } \quad d(\omega_{x}^t, \delta_t (\zeta' \eta'))<\varepsilon t,
\end{equation}
then we must have
\begin{equation}\label{touse}
\rho(f(x)^{-1} f(x\omega_{x}^t),\delta_t (yz))\leq  \left( 3C_{1}\varepsilon^{1/\nu}+ L\varepsilon + LC_{2} \left(2+L^{-1/s}\right)\varepsilon^{1/s} \right)t.
\end{equation}
Let $\omega_{x}^{t}$ approximate in $A$ the direction $\zeta\eta$ at $x$. Then $x\omega_{x}^t\in A$ and $d(\omega_{x}^t, \delta_t (\zeta \eta))<\varepsilon t/2$ for sufficiently small $t$. Since $d(\zeta\eta, \zeta'\eta')<\varepsilon/2$ it follows that $\omega_{x}^{t}$ satisfy \eqref{eq:xtA} and hence \eqref{touse} for sufficiently small $t$. Taking into account the third inequality of \eqref{eq:estimatesC_1varepsilon}, we obtain for sufficiently small $t$:
\begin{equation}\label{Jan19.3}
\rho(f(x)^{-1} f(x\omega_{x}^t),\delta_t (\partial^{+}f(x,\zeta)\partial^{+}f(x,\eta)) )\leq \left( 4C_{1}\varepsilon^{1/\nu}+ L\varepsilon +  LC_{2} \left(2+L^{-1/s}\right)\varepsilon^{1/s} \right)t.
\end{equation}
To summarize, if $\omega_{x}^{t}$ approximate the direction $\zeta\eta$ at $x$ then \eqref{Jan19.3} holds for sufficiently small $t$. Provided $\varepsilon, \delta<1$, the constants $C_{1}$ and $C_{2}$ are independent of the precise value of $\varepsilon$ and $\delta$. We conclude that $\partial^{+}f(x,\zeta\eta)$ exists and is equal to $\partial^{+}f(x,\zeta)\partial^{+}f(x,\eta)$.
\end{proof}

\section{From directional derivatives to differentiability}\label{diff}

We now prove Theorem \ref{precisesubset} which gives conditions for pointwise differentiability. Combining these results with those of Section~\ref{subsets} will yield Theorem~\ref{thm:Pansudiff}.

\begin{proof}[Proof of Theorem \ref{precisesubset}]
First we suppose $f$ is differentiable at $x\in A\cap D(A)$. Then there exists a h-homomorphism $L_x:\vG\to \vM$ such that
\begin{align}\label{di}
\rho(f(x)^{-1} f(xz), L_x(z))=o(d(z)) \qquad \mbox{ as }z\in x^{-1}A \quad \mbox{and} \quad z\to 0.
\end{align}
Let $\zeta\in \vG$. Using Lemma~\ref{convzeta}, choose $\zeta_x^{t}$ such that $x\zeta_{x}^{t}\in A$ and $d(\zeta_{x}^{t}, \delta_{t}\zeta)/t\to 0$ as $t\downarrow 0$. Then \eqref{di} gives
\begin{align}\label{dww}
\rho\left(f(x)^{-1}f(x\zeta_x^{t}), L_x(\zeta_x^{t})\right)=o(d(\zeta_x^{t})) \qquad \mbox{as $t\downarrow 0$}.
\end{align}
Since $d(\zeta_{x}^{t}, \delta_{t}\zeta)/t\to 0$, there exists $K=K(\zeta)>0$ such that $d(\zeta_x^{t})< K t$ for sufficiently small $t$. Combining this with \eqref{dww} gives
\begin{align}\label{dd}
\lim_{t\downarrow 0}\rho\left(\delta_{1/t}(f(x)^{-1}f(x\zeta_x^{t})), \delta_{1/t}L_x(\zeta_x^{t})\right)=0.
\end{align}
Since $L_x$ is a h-homomorphism (and hence Lipschitz), we have
\[
\rho(L_{x}(\zeta),\delta_{1/t}L_{x}(\zeta_{x}^{t}))=\rho(L_{x}(\zeta),L_{x}(\delta_{1/t}\zeta_{x}^{t})) \leq \mathrm{Lip}(L_{x})\frac{d(\delta_{t}\zeta,\zeta_{x}^{t})}{t}\to 0.
\]
Combining this with \eqref{dd} shows $\partial^+ f(x,\zeta)$ exists and is equal to $L_x(\zeta)$ for any $\zeta \in \vG$. Since $L_{x}$ is a h-homomorphism, the h-homomorphism property of directional derivatives follows.

For the converse statement, we assume $\partial^+ f(x, \zeta)$ exists for any $\zeta \in \vG$ and $L_x:\vG\to \vM$ defined by $L_x(\zeta):=\partial^+ f(x, \zeta)$ is a h-homomorphism. For every $\zeta \in \vG$ with $d(\zeta)\leq 1$, use Lemma \ref{convzeta} to choose $\zeta_{x}^{t}$ for $t>0$ with $x\zeta_{x}^{t}\in A$ and $d(\zeta_{x}^{t},\delta_{t}\zeta)/t \to 0$ as $t\downarrow 0$ uniformly for $\zeta \in \vG$ with $d(\zeta)\leq 1$. We first show that
\begin{equation}\label{eq:unif_part_deriv}
\delta_{1/t}(f(x)^{-1}f(x\zeta_x^t)) \to \partial^+ f(x,\zeta)
\quad\text{ as $t\downarrow 0$ uniformly for $\zeta \in \vG$ with $d(\zeta)\leq 1$.}
\end{equation}

Let $K=1+\mathrm{Lip}(L_x)+3L>0$ and fix $\varepsilon>0$. Choose a finite set $S\subset \vG$ such that for any $\eta \in \vG$ with $d(\eta)\leq 1$, there exists $\zeta \in S$ with $d(\zeta, \eta)< \varepsilon/K$. Choose $\delta>0$ such that 
\begin{align}\label{terzastima}
\rho\left(\delta_{1/t}(f(x)^{-1}f(x\zeta_{x}^t)),\partial^+f(x,\zeta)\right)<\varepsilon/K \qquad \mbox{for every }\zeta \in S \mbox{ and }0<t<\delta,
\end{align}
and
\begin{align}\label{may31}
d(\eta_{x}^t, \delta_t\eta)/t< \varepsilon/K \qquad \mbox{for every }\eta \in \vG \mbox{ with }d(\eta)\leq 1 \mbox{ and }0<t<\delta.
\end{align}

Given $\eta \in \vG$ with $d(\eta)\leq 1$, choose $\zeta \in S$ with $d(\zeta, \eta)< \varepsilon/K$. We observe that
\begin{align}\label{stimazero}
\rho\left(\delta_{1/t}(f(x)^{-1}f(x\eta_{x}^t)),\partial^+f(x,\eta)\right)&\leq  \rho\left(\delta_{1/t}(f(x)^{-1}f(x\zeta_{x}^t)),\partial^+f(x,\zeta)\right)\\
\nonumber
&+\rho\left(\delta_{1/t}(f(x)^{-1}f(x\eta_{x}^t)), \delta_{1/t}(f(x)^{-1}f(x\zeta_{x}^t))\right)\\
\nonumber
&+\rho\left(\partial^+f(x,\zeta),\partial^+f(x,\eta)\right).
\end{align}
The first term in \eqref{stimazero} is estimated by \eqref{terzastima} for $0<t<\delta$. We estimate the second term of \eqref{stimazero} as follows:
\begin{align*}
\rho\left(\delta_{1/t}(f(x)^{-1}f(x\eta_{x}^t)), \delta_{1/t}(f(x)^{-1}f(x\zeta_{x}^t))\right)&=\frac{1}{t}\rho\left(f(x\eta_{x}^t), f(x\zeta_{x}^t)\right)\\
&\leq \frac{L}{t} d(\eta_{x}^t, \zeta_{x}^t)\\
&\leq L \left(\frac{d(\eta_{x}^t, \delta_t\eta)}{t}+\frac{d(\zeta_{x}^t, \delta_t\zeta)}{t}+d(\zeta,\eta)\right).
\end{align*}
Using our choice of $\zeta$ and \eqref{may31}, we obtain
\begin{equation}\label{may31.2}
\rho\left(\delta_{1/t}(f(x)^{-1}f(x\eta_{x}^t)), \delta_{1/t}(f(x)^{-1}f(x\zeta_{x}^t))\right)< 3L\varepsilon/K \qquad \mbox{for }0<t<\delta.
\end{equation}
We estimate the final term in \eqref{stimazero} as follows:
\begin{align}\label{secondastima}
\rho(\partial^+f(x,\zeta), \partial^+f(x,\eta))=\rho(L_x(\zeta), L_x(\eta))\leq \mathrm{Lip}(L_x) d(\zeta,\eta)< \varepsilon \mathrm{Lip}(L_x)/K.
\end{align}
Using \eqref{terzastima}, \eqref{may31.2} and \eqref{secondastima} in \eqref{stimazero} together with the definition of $K$ yields
\begin{align*}
\rho\left(\delta_{1/t}(f(x)^{-1}f(x\eta_{x}^t)),\partial^+f(x,\eta)\right)< \varepsilon.
\end{align*}
This proves the uniform convergence \eqref{eq:unif_part_deriv}. 

To conclude we will show that $\rho(f(x)^{-1}f(xz), L_x(z))=o(d(z))$ as $z\to 0$ with $z\in x^{-1}A$. First notice that for $\zeta\in \vG$ with $d(\zeta)\leq 1$:
\begin{align*}
\frac{1}{t}\rho(f(x)^{-1}f(x\zeta_x^t), L_x(\zeta_x^t))&=\rho(\delta_{1/t}(f(x)^{-1}f(x\zeta_x^t)), \delta_{1/t}L_x(\zeta_x^t))\\
&\leq \rho(\delta_{1/t}(f(x)^{-1}f(x\zeta_x^t)),\partial^+f(x,\zeta))+\rho(\partial^+f(x,\zeta), \delta_{1/t}L_x(\zeta_x^t))\\
&= \rho(\delta_{1/t}(f(x)^{-1}f(x\zeta_x^t)),\partial^+f(x,\zeta))+\rho(L_x(\zeta),L_x(\delta_{1/t}\zeta_x^t))\\
&\leq \rho(\delta_{1/t}(f(x)^{-1}f(x\zeta_x^t)),\partial^+f(x,\zeta))+\mathrm{Lip}(L_x)d(\zeta,\delta_{1/t}\zeta_x^t).
\end{align*}
Hence \eqref{eq:unif_part_deriv} and our choice of $\zeta_{x}^{t}$ gives
\begin{align}\label{seconduniform}
\frac{1}{t}\rho(f(x)^{-1}f(x\zeta_x^t), L_x(\zeta_x^t))\to 0 \qquad \mbox{as }t\downarrow 0 \mbox{ uniformly for }d(\zeta)\leq 1.
\end{align}
Given $\varepsilon>0$, use \eqref{seconduniform} and our choice of $\zeta_{x}^{t}$ to choose $\delta>0$ such that
\[ \frac{d(\delta_{t}\zeta, \zeta_{x}^{t})}{t}<\varepsilon \quad \mbox{and} \quad \frac{\rho(f(x)^{-1}f(x\zeta_{x}^{t}),L_{x}(\zeta_{x}^{t}))}{t} < \varepsilon \qquad \mbox{for }d(\zeta)\leq 1 \mbox{ and }0<t<\delta.\]
Now let $z\in x^{-1}A$ with $d(z)\leq \delta$. Choose $\zeta\in \vG$ with $d(\zeta)=1$ such that $z=\delta_{t}\zeta$ for some $0<t<\delta$. We then estimate as follows:
\begin{align*}
\frac{\rho(f(x)^{-1}f(xz), L_{x}(z))}{d(z)} &\leq \frac{\rho(f(x)^{-1}f(x\delta_{t}\zeta), f(x)^{-1}f(x\zeta_{x}^{t}))}{t} + \frac{\rho(f(x)^{-1}f(x\zeta_{x}^{t}), L_{x}(\zeta_{x}^{t}))}{t}\\
& \qquad + \frac{\rho(L_{x}(\zeta_{x}^{t}),L_{x}(\delta_{t}\zeta))}{t}\\
&\leq (L+\mathrm{Lip}(L_{x})) \frac{d(\delta_{t}\zeta,\zeta_{x}^{t})}{t} +  \frac{\rho(f(x)^{-1}f(x\zeta_{x}^{t}), L_{x}(\zeta_{x}^{t}))}{t}\\
&< (L+\mathrm{Lip}(L_{x}) + 1)\,\varepsilon.
\end{align*}
This shows that $f$ is differentiable at $x$.

\end{proof}

Using our results above, we can now quickly show how Theorem~\ref{precisesubset} gives Theorem~\ref{thm:Pansudiff}. By \cite[Lemma 4.9]{Mag}, there exists a {\em spanning set of directions} $\nu_1,\ldots, \nu_N\in V_1$ for some $N\in\N$, namely there exists $T>0$ such that
\[
\set{\delta_{t_1}v_1\delta_{t_2}v_2\cdots\delta_{t_N}v_N: 0\le t_i<T}
\]
contains the unit ball of $\vG$.
If $H_1\subset \vM$ has the RNP then Theorem~\ref{lineardensityderivatives} yields a null set $Z\subset A$ such that for every $x\in A\setminus Z $: $A$ is dense at $x$ in direction $\nu_i$ and $\partial^+ f(x,\nu_i)$ exists for every $i=1,\ldots,N$. 
By Theorem~\ref{generalcase}, we can find a $\sigma$-porous set $P\subset \vG$ such that directional derivatives at $x$ act as h-homomorphisms whenever $x\in A\cap D(A)\setminus P$.

Combining these with the spanning property of $v_1,\ldots,v_N$, it follows that $f$ is differentiable at each $x\in A\cap D(A)\setminus (P\cup Z)$ in any direction $\zeta$ of the unit ball. Proposition~\ref{dilatedirectional} extends this directional differentiability to all directions $\zeta$ of $\vG$. The same Theorem~\ref{generalcase} shows that for $x\in A\cap D(A)\setminus (P\cup Z)$ the directional derivative $\vG\ni \zeta\to \partial^+f(x,\zeta)$ defines a h-homomorphism. We have established both conditions 1 and 2 of Theorem~\ref{precisesubset}, therefore $f$ is differentiable at every point of $A\cap D(A)\setminus (P\cup Z)$. Since $\mathcal{H}^{Q}_d(A\setminus D(A))=0$ and porous sets have measure zero, our claim follows.

\begin{Rem}
Taking into account Remark~\ref{Rem:derivopen}, when the domain $A$ of $f$ is an open set, properties \eqref{A1} and \eqref{A2} of Definition~\ref{Def:P} are automatically satisfied. Hence several arguments in the proofs of both Theorem~\ref{generalcase} and Theorem~\ref{precisesubset} become simpler.
\end{Rem}

\section{Appendix}\label{ProofMag} 

\subsection{Separable Banach Homogeneous Groups}\label{SepBanTarg}
Let $\vM$ be a Banach homogeneous group of step $\nu$.
For every $\cv=(v_1,\ldots,v_k)\in \vM^k$, $k\ge1$, we define the nonassociative product
\[
\cp_k(\cv)=[\cdots[[v_1,v_2],v_3]\cdots],v_k]=v_1\circ v_2\circ\cdots\circ v_k,
\]
where $\cp_k:\vM^k\to \vM$ is clearly a continuous multilinear mapping.
For any countable subset 
\[
\cN=\left\{x_j\in\vM:j\in\N\right\},
\]
we define its {\em rational homogeneous span} as follows
\[
\lan\lan \cN\ran\ran=\left\{ \sum_{j=0}^n  \sum_{l=1}^\nu \lambda_{jl}\, \pi_l(x_j): \ \lambda_{jl}\in\Q,\ n\in\N\right\}\subset\vM.
\]
The {\em closed homogeneous span of $\cN$} is simply $\lan\cN\ran=\overline{\lan\lan\cN\ran\ran}\subset\vM$.
This is clearly a linear subspace of $\vM$ that is also {\em homogeneous}, that is
$\delta_r\lan\cN\ran\subset\lan\cN\ran$ for every $r>0$. 
Since $\lan\lan\cN\ran\ran$ is countable, $\lan \cN\ran$ is a separable homogeneous Banach space,
that in particular contains $\overline\cN$.

\begin{Teo}\label{teo:sepsub}
There exists a separable Banach homogeneous subgroup $\vV\subset\vM$ such that $\lan \cN\ran\subset\vV$.
\end{Teo}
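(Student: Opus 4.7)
My plan is to construct $\vV$ as the closed rational homogeneous span of a countable enlargement of $\cN$ that is already stable under the layer projections and under the Lie bracket. Concretely, I would set $S_0:=\cN$ and define inductively
\[
S_{k+1}:=S_k\cup\set{\pi_l(x):x\in S_k,\ 1\le l\le\nu}\cup\set{[x,y]:x,y\in S_k},
\]
and let $\widetilde\cN:=\bigcup_{k\ge 0}S_k$. Each $S_k$ is countable, so $\widetilde\cN$ is countable; by construction it is stable under every $\pi_l$ and under the bracket. Setting $\vV:=\lan\widetilde\cN\ran$ then yields a separable, closed, homogeneous Banach subspace of $\vM$ containing $\lan\cN\ran$, and it remains to verify that $\vV$ is actually a Banach homogeneous subgroup.

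For the stratification I would let $\vV_l:=\vV\cap H_l$. Since $\widetilde\cN$ is stable under the $\pi_l$, each element of $\lan\lan\widetilde\cN\ran\ran$ is a $\Q$-combination of elements of $\widetilde\cN\cap(H_1\cup\cdots\cup H_\nu)$, hence its $H_l$-component lies in $\vV_l$. Continuity of the $\pi_l$ extends this to $\pi_l(\vV)\subset\vV_l$, so $v=\sum_l\pi_l(v)$ yields $\vV=\vV_1+\cdots+\vV_\nu$, with uniqueness inherited from the ambient stratification of $\vM$. The summation map $\vV_1\times\cdots\times\vV_\nu\to\vV$ is then a continuous linear bijection, and the open mapping theorem upgrades it to a Banach isomorphism, so the decomposition is a genuine Banach direct sum. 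For closure under the bracket, bilinearity reduces $[u,w]$ with $u,w\in\lan\lan\widetilde\cN\ran\ran$ to a $\Q$-combination of brackets of elements of $\widetilde\cN$, which lie in $\widetilde\cN\subset\lan\lan\widetilde\cN\ran\ran$; joint continuity of the bracket then gives $[v,w]\in\vV$ for all $v,w\in\vV$.

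With $\vV$ now a closed nilpotent Banach Lie subalgebra of $\vM$ of step at most $\nu$ carrying the stratification $\vV=\vV_1\oplus\cdots\oplus\vV_\nu$, the Dynkin polynomials $P_m(x,y)$ appearing in the BCH formula \eqref{gopeB} take values in $\vV$ whenever $x,y\in\vV$, so the group product and the inversion $x^{-1}=-x$ both restrict to $\vV$; the dilations preserve $\vV$ because $\delta_r x=\sum_l r^l\pi_l(x)$. This shows $\vV$ is a separable Banach homogeneous subgroup of $\vM$ containing $\lan\cN\ran$, as required. The main technical point I anticipate is coordinating the countability of $\widetilde\cN$ with simultaneous closure under the projections and the bracket, and then upgrading the algebraic direct sum among the $\vV_l$ to a genuine Banach direct sum; the inductive construction takes care of the former, while the continuous projections $\pi_l|_\vV$ summing to the identity, combined with the open mapping theorem, handle the latter.
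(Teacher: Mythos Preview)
Your argument is correct and takes a genuinely different route from the paper. The paper fixes $\vY_1=\lan\cN\ran$ and then builds higher layers $\vY_k$ as finite sums of $k$-fold iterated brackets $\cp_k(\cv)$ with entries from $\vY_1$; it then needs an inductive argument based on the Jacobi identity to establish $[\vY_k,\vY_l]\subset\vY_{k+l}$, so that $V=\vY_1+\cdots+\vY_\nu$ is a Lie subalgebra, and finally proves separability of $\vV=\overline V$ by a separate density argument with auxiliary countable sets $\lan\lan\cN\ran\ran_k$. Your construction instead enlarges $\cN$ once and for all to a countable $\widetilde\cN$ already stable under the $\pi_l$ and the bracket, and then sets $\vV=\lan\widetilde\cN\ran$. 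This front-loads the closure properties so that the Lie subalgebra claim follows directly from bilinearity and continuity, with no need for the Jacobi induction, and separability is automatic from the definition of the closed homogeneous span. The paper's approach yields the extra structural information that $\vV$ is generated as a Lie algebra by the single closed homogeneous span $\lan\cN\ran$, but for the statement at hand your method is shorter and more elementary. One small remark: just as in the paper's proof, your stratification $\vV=\vV_1\oplus\cdots\oplus\vV_\nu$ may have some trivial top layers, so the step of nilpotence of $\vV$ can be strictly less than $\nu$; this is harmless, since the dilations, homogeneous norm and BCH product all restrict correctly regardless.
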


\begin{proof}
Let $\vY_1=\lan\cN\ran$ be the separable and homogeneous Banach space spanned by $\cN$ and consider 
\[
\vY_k=\left\{\sum_{\cv\in \cI} \cp_k(\cv):\ \cI\subset\vY_1^k\ \text{is finite}\right\}
\]
for each $k=2,\ldots,\nu$. Since dilations are also Lie algebra homomorphisms
and $\vY_1$ is closed under dilations, it follows that $\delta_r\vY_k\subset\vY_k$ for each $k=1,\ldots,\nu$ and $r>0$. 
It is clear that $\vY_k$ is a linear subspace also for $k=2,\ldots,\nu$.

We wish to show that for any couple of finite subsets
$\cI\subset \vY_1^k$ and $\cJ\subset \vY_1^l$ with 
\[
\sum_{\cv\in \cI} \cp_k(\cv)\in \vY_k \quad\text{and}\quad 
\sum_{\cw\in \cJ} \cp_l(\cw)\in \vY_l
\]
the following conditions hold
\begin{equation}\label{eq:subalgebraY_kl}
\left[y, \sum_{\cw\in \cJ} \cp_l(\cw)\right]\in \vY_{l+1}\quad \text{and}\quad 
\left[ \sum_{\cv\in \cI} \cp_k(\cv),\sum_{\cw\in \cJ} \cp_l(\cw) \right]\in \vY_{k+l}
\end{equation}
where $y\in \vY_1$. Notice that the nilpotence of $\vM$ gives $\vY_{k+l}=\{0\}$ whenever $k+l>\nu$. The first condition of \eqref{eq:subalgebraY_kl} is straightforward:
\[
\left[y, \sum_{\cv\in \cJ} \cp_l(\cw)\right]=
-\sum_{\cv\in \cJ} \cp_{l+1}((\cw,y))\in \vY_{l+1}.
\]
The second one for $k=2$ is a consequence of the Jacobi identity, that yields
\[
\begin{split}
\left[[v_1,v_2], \sum_{\cw\in \cJ} \cp_l(\cw) \right]&=
-\left[\left[v_2,\sum_{\cw\in \cJ} \cp_l(\cw)  \right], v_1\right]-\left[\left[\sum_{\cw\in \cJ} \cp_l(\cw),v_1  \right], v_2\right] \\
&=\sum_{\cw\in \cJ} \cp_{l+2}((\cw,v_2,v_1)) -
\sum_{\cw\in \cJ} \cp_{l+2}((\cw,v_1,v_2))\in\vY_{l+2}
\end{split}
\]
for every positive integer $l$. 
If we assume the second condition of \eqref{eq:subalgebraY_kl} to hold for a
fixed $k\ge2$ and every $l\ge1$, then setting 
\[
\cv=(v_1,v_2,\ldots,v_{k+1})\in\vY_1^{k+1}\quad\text{and}\quad 
\tilde \cv=(v_1,v_2,\ldots,v_k)\in\vY_1^k,
\]
we consider the product
\[
\begin{split}
\left[\cp_{k+1}(\cv), \sum_{\cw\in \cJ} \cp_l(\cw) \right]&=
\left[[\cp_k(\tilde\cv),v_{k+1}], \sum_{\cw\in \cJ} \cp_l(\cw) \right] \\
&=-\left[\left[v_{k+1}, \sum_{\cw\in \cJ} \cp_l(\cw)\right],\cp_k(\tilde\cv) \right]
-\left[\left[\sum_{\cw\in \cJ} \cp_l(\cw),\cp_k(\tilde\cv)\right],v_{k+1} \right]\\
&=-\left[\cp_k(\tilde\cv), \sum_{\cw\in \cJ} \cp_{l+1}((\cw,v_{k+1})) \right]
+\sum_{\cw\in \cJ}\left[[\cp_k(\tilde\cv), \cp_l(\cw)],v_{k+1} \right].
\end{split}
\]
All addends of the previous sum belong to $\vY_{k+l+1}$ by the inductive assumption, hence establishing conditions \eqref{eq:subalgebraY_kl}. The linear subspace
\[
V=\vY_1+\vY_2+\cdots +\vY_\nu\subset\vM
\]
is closed under dilations, since so are all the subspaces $\vY_j$.
Conditions \eqref{eq:subalgebraY_kl} immediately show that $V$ is a
Lie subalgebra of $\vM$. It turns out that its closure
\[
\vV=\overline{V}
\]
is a Banach Lie subalgebra, that is also closed under dilations, 
due to the continuity of the Lie product and of dilations. Then the same argument
in the proof of \cite[Proposition~7.2]{Mag} shows that $\vV$ is actually 
a direct sum of subspaces $U_1,\ldots,U_\nu$ with 
$[U_i,U_j]\subset U_{i+j}$ and $U_{i+j}=\set{0}$ whenever $i+j>\nu$.
Since the group operation is given by the Baker-Campbell-Hausdorff formula, 
it turns out that $\vV$  is a Banach homogeneous subgroup of $\vM$.

To show that $\vV$ is also separable, we first consider the countable subsets
\[
\lan\lan\cN\ran\ran_k=\left\{\sum_{v\in \cI}  \cp_k(v):\  \cI\subset\big(\lan\lan\cN\ran\ran\big)^k \ \text{is finite} \right\}
\]
where $2\le k\le\nu$ and define the closed subset
\[
\vW=\overline{\lan\lan\cN\ran\ran+\lan\lan\cN\ran\ran_2+\cdots 
+\lan\lan\cN\ran\ran_\nu}.
\]
Clearly $\vW$ is separable, being all the addends $\lan\lan \cN\ran\ran$ and  $\lan\lan \cN\ran\ran_k$ countable.
The simple inclusion $\lan\lan\cN\ran\ran_k \subset\vY_k$
for $k\ge2$ joined with $\vY_1=\overline{\lan\lan\cN\ran\ran}$ immediately lead us to the following
\[
\lan\lan\cN\ran\ran+\lan\lan\cN\ran\ran_2+\cdots 
+\lan\lan\cN_\nu\ran\ran_\nu\subset V,
\]
that gives $\vW\subset\vV$.

The opposite inclusion follows from the following
\begin{equation}\label{eq:inclusionV}
\vW\supset V.
\end{equation}
To prove this, we consider any element 
\[
w=v+\sum_{k=2}^\nu \sum_{\cv \in \cI_k} \cp_k(\cv)\in V,
\]
with $v\in\vY_1$ and $\cI_k$ finite subset of $\vY_1^k$.
Since $\overline{\lan\lan \cN\ran\ran}=\vY_1$, we find a sequence $\{y_n\}$ of $\lan\lan\cN\ran\ran$ with $y_n\to v$. For $k\ge2$ and any $\cv\in \cI_k$ we find a sequence $\{\cv_n\}\subset(\lan\lan\cN\ran\ran)^k$ such that $\cv_n\to \cv$. By the continuity of $\cp$, we get
\[
\cp_k(\cv_n)\to\cp_k(\cv) \quad \text{ as $n\to\infty$}.
\]
We conclude that 
\[
z_n=y_n+\sum_{k=2}^\nu \sum_{\cv \in \cI_k} \cp_k(\cv_n)\in
\lan\lan\cN\ran\ran+\lan\lan\cN\ran\ran_2+\cdots 
+\lan\lan\cN\ran\ran_\nu
\]
and $z_n\to w\in\vW$, showing the validity of \eqref{eq:inclusionV}
and then concluding the proof.
\end{proof}

\begin{Teo}\label{Teo:separable_image}
If $A\subset\vG$ and $f:A\to \vM$ is continuous, then there exists a separable Banach homogeneous
group $\vM_0\subset\vM$ such that $f(A)\subset \vM_0$.
\end{Teo}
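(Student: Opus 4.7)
The plan is to reduce this immediately to Theorem~\ref{teo:sepsub} by producing a countable subset of $\vM$ whose closure already contains $f(A)$. Since $\vG$ is a stratified group, its underlying manifold is finite dimensional, hence $\vG$ with its homogeneous distance is separable. As a subset of a separable metric space, $A$ inherits separability, so I can pick a countable set $D\subset A$ that is dense in $A$.

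Set $\cN=f(D)$, which is countable. The continuity of $f$ on $A$ implies that for every $x\in A$ there is a sequence $\{x_n\}\subset D$ with $x_n\to x$ and $f(x_n)\to f(x)$, so
\[
f(A)\subset \overline{f(D)}=\overline{\cN}.
\]
Now apply Theorem~\ref{teo:sepsub} to $\cN$: there exists a separable Banach homogeneous subgroup $\vV\subset\vM$ such that $\lan\cN\ran\subset\vV$. Since $\lan\cN\ran$ was defined as the closure of the rational homogeneous span $\lan\lan\cN\ran\ran$, it contains $\overline{\cN}$, and hence $f(A)\subset\vV$. Setting $\vM_0=\vV$ finishes the proof.

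There is essentially no obstacle; the only point that requires attention is confirming that separability of the domain $\vG$ is all one needs, not separability of $A$ as a topological space in some finer sense. This is fine because the homogeneous distance turns $\vG$ into a separable metric space, and separability passes to arbitrary subsets in the metric topology. All the heavy lifting (closure under the group operation, dilations, and the bracket, plus separability of the resulting subgroup) is already packaged inside Theorem~\ref{teo:sepsub}.
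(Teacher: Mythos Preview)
Your proof is correct and follows essentially the same route as the paper: produce a countable $\cN\subset\vM$ with $f(A)\subset\overline{\cN}$ and invoke Theorem~\ref{teo:sepsub}. You simply make explicit what the paper leaves implicit, namely why such an $\cN$ exists (separability of $\vG$ plus continuity of $f$) and why $\overline{\cN}\subset\lan\cN\ran$.
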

\begin{proof}
Let $\cN\subset \vM$ be a countable subset such that $f(A)\subset\overline\cN$.
Then Theorem~\ref{teo:sepsub} provides us with a separable Banach homogeneous group
$\vM_0\subset\vM$ containing $\overline\cN$, therefore concluding the proof.
\end{proof}

\subsection{Proof of Lemma~\ref{estMagn}}\label{Sect:estMagn}
Fix $b>0$. Since each map $\pi_i$ is continuous and linear, the following inequality holds for $|x|\leq b$, where $c>0$ only depends on $\vM$:
\begin{align}\label{stima}
|\pi_i(x)|^{1/i} \le c^{1/i}|x|^{1/i}&=c^{1/i}(1+b)^{1/i}\left(\frac{|x|}{1+b}\right)^{1/i}\\
& \le c^{1/i}(1+b)^{\frac1i-\frac{1}{\nu}}|x|^{1/\nu} \nonumber \\
&\le \max\{c,\ 1\}\,(1+b)^{1-\frac{1}{\nu}}\, |x|^{1/\nu}, \nonumber
\end{align}
By \eqref{Banhom} and \eqref{stima} there exists $C_{1,b}>0$, depending on $b$, such that 
\begin{equation}\label{eq:estimhomnormBan}
\| x\|\le C_{1,b}\, |x|^{1/\nu} \quad\text{for}\quad |x|\le b.
\end{equation}
Conversely, from \eqref{Banhom} one may easily see that there exists $C_{2,b}>0$ such that
\begin{align}\label{d}
|x|\le  C_{2,b} \quad\mbox{for}\quad \|x\|\le b.
\end{align}
The same formula also yields
\begin{equation}\label{eq:revestimhomnormBan}
|x| \le \sum_{i=1}^{\nu} |\pi_{i}(x)|\le \max_{1\le i\le \nu}\set{\frac1{(\sigma_i)^i}}\sum_{i=1}^{\nu} \pa{C_{1,b}\, b^{1/\nu}}^{i-1} \|x\|= C_{3,b}\,\|x\| \qquad \mbox{for $|x|\le b$}.
\end{equation}
We observe that for the addends in \eqref{dynkin} to be non-zero, the iterated nonassociative product must always start from the factor $x\circ y$ or $y\circ x$. Thus, the continuity of the Lie bracket yields
\begin{equation}\label{eq:ext[xy]}
|\mbox{\small $\underbrace{x\circ\cdots\circ x}_{p_1\; times}\circ\overbrace{y\circ\cdots\circ y}^{q_1\; times}\circ \cdots\circ  \underbrace{x\circ\cdots\circ x}_{p_k\; times}\circ\overbrace{y\circ\cdots\circ y}^{q_k\; times }$ }|\le C^{m-2} b^{m-2} |[x,y]|
\end{equation}
where $\sum_{i=1}^{k} (p_{i}+q_{i})=m$ and we have assumed in addition that $|x|,|y|\le b$ for some $b>0$. 
\begin{Claim}\label{Claim 2}
Let $b>0$. Then there exists $C_{4,b}>0$, depending on $b$, such that 
\begin{eqnarray}\label{eq:conjugate_est}
\|y^{-1}xy\|\leq C_{4,b}\;|x|^{1/\nu} \qquad \mbox{for }|x|,|y|\leq b.
\end{eqnarray}
\end{Claim}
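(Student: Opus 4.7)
The plan is to first estimate the underlying Banach norm $|y^{-1}xy|$ in terms of $|x|$ using the BCH structure, and then to invoke the inequality \eqref{eq:estimhomnormBan} already established above in order to pass from the Banach norm to the homogeneous norm; the extra $1/\nu$-power on the right side of \eqref{eq:conjugate_est} reflects precisely this transition.

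The key observation is that, since the group operation on $\vM$ is given by the BCH formula \eqref{gopeB}--\eqref{dynkin} and the group exponential is the identity map of $\vM$, the conjugation $x\mapsto y^{-1}xy$ coincides with the adjoint action $e^{-\operatorname{ad}(y)}(x)$, where $\operatorname{ad}(y)(x):=[y,x]$. Equivalently, every summand in the BCH expansion of $y^{-1}\cdot x\cdot y$ other than $x$ itself is an iterated bracket in which $x$ appears exactly once, so by nilpotency of step $\nu$ the expansion terminates and gives
$$y^{-1}xy \;=\; \sum_{k=0}^{\nu-1} \frac{(-1)^k}{k!}\,\operatorname{ad}(y)^k(x).$$
By continuity of the Lie bracket on $\vM$, there exists $C_\vM>0$ such that $|[u,v]|\le C_\vM|u||v|$ for all $u,v\in\vM$; iterating this estimate and using $|y|\le b$ leads to
$$|y^{-1}xy| \;\le\; |x|\sum_{k=0}^{\nu-1}\frac{(C_\vM b)^k}{k!} \;\le\; e^{C_\vM b}|x|.$$

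In particular $|y^{-1}xy|\le b\,e^{C_\vM b}=:B$, so that an application of \eqref{eq:estimhomnormBan} with $B$ in place of $b$ yields
$$\|y^{-1}xy\| \;\le\; C_{1,B}\,|y^{-1}xy|^{1/\nu} \;\le\; C_{1,B}\,e^{C_\vM b/\nu}\,|x|^{1/\nu},$$
which proves the claim with $C_{4,b}:=C_{1,B}\,e^{C_\vM b/\nu}$. The delicate point that requires verification is the identification of $y^{-1}xy$ with $\sum_k(-1)^k\operatorname{ad}(y)^k(x)/k!$ in the present infinite-dimensional Banach framework; as an alternative, one may expand $y^{-1}\cdot x\cdot y$ term-by-term directly via \eqref{gopeB}--\eqref{dynkin}, observing that every surviving summand contains $x$ at least once, and bound each such summand in the spirit of \eqref{eq:ext[xy]}, obtaining an analogous estimate with a possibly larger constant.
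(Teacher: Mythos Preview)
Your proof is correct and takes a genuinely different, more conceptual route than the paper. The paper proceeds by expanding $y^{-1}xy$ through two successive applications of the BCH formula \eqref{gopeB},
\[
y^{-1}xy = x + \sum_{m=2}^{\nu} P_m(x,y) + \sum_{m=2}^{\nu} P_m(-y,xy),
\]
and then estimates the $P_m(x,y)$ terms directly via \eqref{eq:ext[xy]}, while for the $P_m(-y,xy)$ terms it first bounds $|xy|$, then reduces to $|[y,xy]|$, and finally expands $[y,xy]$ once more to extract the factor $|x|$. This yields $|y^{-1}xy|\le C_b|x|$ by a hands-on, entirely self-contained chain of inequalities, after which \eqref{eq:estimhomnormBan} gives the claim. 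Your argument bypasses all of this term-tracking by invoking the Lie-theoretic identity $y^{-1}xy = e^{-\operatorname{ad}(y)}(x)$, which makes the linear dependence on $x$ transparent and the bound $|y^{-1}xy|\le e^{C_{\vM} b}|x|$ a one-line consequence of the continuity of the bracket; the final step via \eqref{eq:estimhomnormBan} is the same. What your approach buys is clarity and brevity; what the paper's approach buys is that it never leaves the framework set up in Section~\ref{preliminaries} and requires no external Lie-group input. Your caveat about the adjoint identity in the Banach setting is well placed: since both sides of $(-y)\cdot x\cdot y = \sum_{k=0}^{\nu-1}\frac{(-1)^k}{k!}\operatorname{ad}(y)^k(x)$ are finite polynomial expressions in iterated brackets, the identity is a purely algebraic consequence of BCH in any nilpotent Lie algebra and therefore carries over to $\vM$ without analytic subtleties; alternatively, as you note, one recovers exactly the paper's computation by expanding BCH directly.
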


\begin{proof}
Recalling $y^{-1}=-y$ and applying Dynkin's formula with the pairs $-y$, $xy$ then $x, y$, we obtain
\begin{eqnarray}\label{bchassoc}
y^{-1}xy=x+\sum_{m=2}^\nu P_m(x,y)+\sum_{m=2}^\nu P_m(-y,x y).
\end{eqnarray}
Taking into account \eqref{eq:ext[xy]}, it is easy to see that there exists $C_{5,b}>0$, such that 
\[
|y^{-1}xy|\le C_{5,b}|x|+
\left|\sum_{m=2}^\nu P_m(-y,x y)\right|.
\]
Another application of \eqref{eq:ext[xy]}, joined with \eqref{dynkin}, gives a constant $C_{6,b}>0$ such that 
\[
| x y|\leq C_{6,b} \qquad \mbox{for }|x|,|y|\le b.
\]
Again using \eqref{eq:ext[xy]}, it follows that
\begin{eqnarray}\label{iterest}
|P_m(-y,xy)| \leq C_{7,b}  \, |[y,xy]|
\end{eqnarray}
for each $m=2,\ldots,\nu$ and a suitable constant $C_{7,b}>0$, depending on $b$.
Thus, we have proved the existence of $C_{8,b}>0$, depending on $b$, such that \begin{equation}\label{eq:y-1xyest}
|y^{-1}xy|\le C_{8,b}\,(|x|+|[y,xy]|).
\end{equation}
Now, we observe that
\[
[y,xy]=\big[y, x+\sum_{m=2}^\nu P_m(x,y)\big]=[y,x]+\big[y,\sum_{m=2}^\nu P_m(x,y)\big].
\]
This together with the continuity of $\left[\cdot,\cdot\right]$ leads us 
to the estimate
\[
|[y,x y]|\leq C_0\,\Big( |x||y|+ |y| \sum_{m=2}^\nu |P_m(x,y)|\Big)
\]
where $C_0>0$ only depends on the Banach norm chosen on $\vM$.
As a consequence, there exists $C_{9,b}>0$ such that
\begin{equation}\label{eq:|[y,xy]|}
|[y,x y]|\leq C_{9,b} |x|\quad\text{for} \quad |x|,|y|\le b.
\end{equation}
The previous inequality joint with \eqref{eq:y-1xyest} gives
$|y^{-1}xy| \leq C_{10,b}$ for some $C_{10,b}>0$ depending on $b$.
This bound on $|y^{-1}xy|$ allows us to apply \eqref{eq:estimhomnormBan}, 
that joined with \eqref{eq:y-1xyest} and \eqref{eq:|[y,xy]|} gives \eqref{eq:conjugate_est}.
\end{proof}

\begin{Claim}\label{Claim 3}
Let $N$ be a positive integer and let $A_j,B_j\in\vM$ with $j=1,\ldots,N$. Let $b>0$ be such that $|B_jB_{j+1}\cdots B_N|\leq b$ and $|B_j^{-1}A_j|\leq b$
for every $j=1,\ldots,N$. Then there exists $C>0$ such that
\begin{eqnarray}\label{eq:estimprod}
\rho\big(A_1A_2\cdots A_N,B_1B_2\cdots B_N\big)\leq   \rho (A_N,B_N) +C\sum_{j=1}^{N-1}|A_j^{-1} B_j|^{1/\nu}.
\end{eqnarray}
\end{Claim}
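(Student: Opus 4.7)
The plan is to prove the inequality by a telescoping decomposition. I would introduce the interpolating sequence $M_0=A_1A_2\cdots A_N$ and $M_j=B_1\cdots B_j\,A_{j+1}\cdots A_N$ for $j=1,\ldots,N$, so that $M_N=B_1\cdots B_N$. The triangle inequality for $\rho$ reduces the task to estimating each one-step difference $\rho(M_{j-1},M_j)$, and by the left-invariance of $\rho$ this simplifies to
\[
\rho(M_{j-1},M_j)=\rho\big(A_jA_{j+1}\cdots A_N,\ B_jA_{j+1}\cdots A_N\big).
\]
For $j=N$ the right-hand side is exactly $\rho(A_N,B_N)$, contributing the first term of \eqref{eq:estimprod}. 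For $j<N$, setting $X_j:=A_j^{-1}B_j$ and $Y_j:=A_{j+1}\cdots A_N$, the left-invariance gives $\rho(M_{j-1},M_j)=\|Y_j^{-1}X_jY_j\|$, which is exactly a conjugation to which Claim~2 (that is, estimate \eqref{eq:conjugate_est}) applies.

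To deploy Claim~2 with a constant $C$ independent of $j$, I would verify a priori bounds $|X_j|\le b$ and $|Y_j|\le b'$ with $b'$ depending only on $b$ and $N$. The bound on $X_j$ is immediate: since in a nilpotent Banach Lie algebra inverses are negatives, $A_j^{-1}B_j=-(B_j^{-1}A_j)$, so $|X_j|=|B_j^{-1}A_j|\le b$. Producing $b'$ requires passing between $|\cdot|$ and $\|\cdot\|$: by \eqref{eq:estimhomnormBan} the hypotheses give $\|B_j\cdots B_N\|\le C_{1,b}\,b^{1/\nu}$ and $\|B_j^{-1}A_j\|\le C_{1,b}\,b^{1/\nu}$; writing $B_j=(B_j\cdots B_N)(B_{j+1}\cdots B_N)^{-1}$ and then $A_j=B_j(B_j^{-1}A_j)$, and using the subadditivity and symmetry under inversion of the homogeneous norm, yields uniform bounds on $\|B_j\|$ and on $\|A_j\|$. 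Summing over $i=j+1,\ldots,N$ controls $\|Y_j\|$ uniformly in $j$, and \eqref{d} then converts this homogeneous-norm bound into the desired $|Y_j|\le b'$.

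With these a priori bounds in hand, Claim~2 provides a uniform constant $C_{4,b'}$ such that $\|Y_j^{-1}X_jY_j\|\le C_{4,b'}\,|A_j^{-1}B_j|^{1/\nu}$ for every $j=1,\ldots,N-1$, and collecting the telescoped estimates delivers \eqref{eq:estimprod} with $C=C_{4,b'}$. The delicate point, which I expect to be the main obstacle, is precisely the uniform a priori control on $|Y_j|$: the hypotheses bound only the $B$-side partial products and the single-step differences $B_j^{-1}A_j$, so obtaining a bound on the mixed $A$-side partial products $A_{j+1}\cdots A_N$ forces one to shuttle carefully between the Banach norm and the homogeneous norm through the equivalence inequalities \eqref{eq:estimhomnormBan} and \eqref{d}, keeping track of how the constants depend on $b$.
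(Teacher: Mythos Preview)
Your proposal is correct, but it takes a different telescoping route from the paper. You interpolate by replacing $A_j$ with $B_j$ one factor at a time from the left, so the one-step error is a conjugation $\|Y_j^{-1}X_jY_j\|$ with $Y_j=A_{j+1}\cdots A_N$; since the hypotheses bound only the $B$-tails $B_j\cdots B_N$ and the differences $B_j^{-1}A_j$, you must manufacture an a priori bound on the $A$-tails by shuttling between $|\cdot|$ and $\|\cdot\|$ as you describe. This works, but the resulting bound on $|Y_j|$ (and hence the constant $C$) depends on $N$. The paper instead peels recursively: from $\rho(\hat A_1,\hat B_1)=\rho(\hat A_2,A_1^{-1}B_1\hat B_2)$ it uses the triangle inequality to get $\rho(\hat A_2,\hat B_2)+\|\hat B_2^{-1}(A_1^{-1}B_1)\hat B_2\|$, so the conjugating element is always a $B$-tail $\hat B_{j+1}$, which is bounded by $b$ \emph{directly} from the hypotheses. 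Claim~2 then applies with the single constant $C_{4,b}$, independent of $N$, and no auxiliary norm estimates are needed. Your approach buys a perhaps more transparent telescoping picture; the paper's buys a sharper constant and avoids the ``delicate point'' you anticipated entirely.
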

\begin{proof}
Define $\hat{B}_j=B_jB_{j+1}\cdots B_N$ and $\hat{A}_j=A_jA_{j+1}\cdots A_N$. 
Using the left invariance of $\rho$ and Claim~\ref{Claim 2}, we obtain
\begin{align*}
\rho\big(\hat{A}_1,\hat{B}_1\big)=\rho\big(\hat{A}_2,A_1^{-1}B_1\hat{B}_2\big)&\leq  \rho(\hat A_2,\hat B_2)+\|\hat B_2^{-1}A_{1}^{-1}B_1\hat B_2\| \\
&\leq  \rho(\hat A_2,\hat B_2)+C_{4,b} |A_1^{-1}B_1|^{1/\nu} \\
&\leq  \rho(\hat A_3,\hat B_3)+C_{4,b} |A_2^{-1}B_2|^{1/\nu}+C_{4,b} |A_1^{-1}B_1|^{1/\nu}\\
&\le  \rho(A_N,B_N) +C \sum_{j=1}^{N-1}|A_j^{-1} B_j|^{1/\nu}.\qedhere
\end{align*}
\end{proof}
From the hypotheses of Lemma~\ref{estMagn}, taking into account \eqref{d}, we see
\[
|B_jB_{j+1}\cdots B_N|\le C_{2,b} \quad\mbox{and}\quad  |A_j^{-1}B_j|\leq  C_{2,b}\qquad \mbox{for }j=1,\ldots,N.
\]
Hence we may apply Claim~\ref{Claim 3}. Recalling that $\rho(A_N,B_N)\leq b$ and using \eqref{eq:revestimhomnormBan} we obtain
\[
\rho\big(A_1A_2\cdots A_N,B_1B_2\cdots B_N\big)\leq  \rho(A_N,B_N) +C_{3,C_{2,b}}\sum_{j=1}^{N-1} \|A_j^{-1} B_j\|^{1/\nu}
\leq  C_b \sum_{j=1}^N\rho(A_j, B_j)^{1/\nu},
\]
where $C_b=\max\set{b^{1-\frac1\nu},C_{3,C_{2,b}}}$. This concludes the proof of Lemma~\ref{estMagn}.

\subsection{Measurability of points where a directional derivative exists}\label{Sect:Measurability}

The aim of this section is to prove that the set $D_{f,\zeta}$ in the proof of 
Theorem~\ref{lineardensityderivatives} is measurable. Recall that 
$f:A\to\vM$ is Lipschitz and $A\subset\vG$ is measurable.
It was shown earlier that the set $A_{\zeta}$ of points at which $A$ is dense in direction $\zeta$ is measurable and $\mu(A\setminus A_{\zeta})=0$. The set $D_{f,\zeta}$ is the subset of points in $A_\zeta$ where $f$ is differentiable in direction $\zeta$.
By Theorem~\ref{Teo:separable_image}, we can assume that the target $\vM$ is a separable Banach homogeneous group.
Fix $z_{i}\in \vM$ with $\overline{\set{z_i: i\in\N}}=\vM$. Let $P_i(z)=\rho(z,z_i)$ for $z\in\vM$. For $y\in A$ and $t>0$, define measurable functions
$g^i_{t,y,+},g^i_{t,y,-}:A_\zeta\to\bbR\cup\set{+\infty,-\infty}$ by: 
\[
g^i_{t,y,\pm}(x)=\left\{\begin{array}{ll} 
P_i\pa{\delta_{1/t}\pa{f(x)^{-1}f(y)}} &  \text{if $y\in A\cap B(x\delta_t\zeta,t^2)$}  \\
\pm\infty & \text{if $y\in A\setminus B(x\delta_t\zeta,t^2)$.}
\end{array}\right.
\]
Fix a countable set $\cS\subset A$ with $A\subset \overline\cS$. Define
\[
x\to\ph(x)=\lim_{R\to+\infty}\sup_{i\in\N}\, \sup_{\substack{0<s,t<1/R \\ s,t\in\Q}}
\,\sup_{(y,z)\in\cS^2}\pa{g^i_{t,y,-}(x)-g^i_{s,z,+}(x)},
\]
which is well defined and measurable on $A_\zeta$. We claim that $f$ is differentiable at $x$ in direction $\zeta$ if and only if $\ph(x)=0$.

First suppose $x\in A_\zeta$ and $f$ is differentiable at $x$ in direction $\zeta$. Proposition~\ref{Pro:fDiffDirZeta} implies that
\[
\lim_{\;\,t\downarrow 0,\, x\delta_t\zeta\in A} \delta_{1/t}\pa{f(x)^{-1}f(x\delta_t\zeta)}
=\der^+f(x,\zeta).
\]
Since $x$ is a density point of $A$ in direction $\zeta$, there exists $T_t>0$ with $x\delta_{T_t}\zeta\in A$ and $T_t/t\to1$ as $t\downarrow0$. For $y\in B(x\delta_t\zeta,t^2)$, let
\[
E_{x,y,t}=\delta_{1/t}\pa{f(x\delta_{T_t}\zeta)^{-1}f(y)}
\]
Taking into account that $\zeta$ is horizontal, we have
\[
\rho(E_{x,y,t})\le L\, \frac{d(x\delta_{T_t}\zeta,y)}t< \frac Lt
\pa{|T_t-t|\,d(\zeta)+t^2}=L\theta_t,
\]
where $\theta_t\to0$ as $t\downarrow0$. We set
\[
(\delta f)_{x,\zeta,t}=\delta_{1/t}\pa{f(x)^{-1}f(x\delta_{T_t}\zeta)},
\]
observing that $(\delta f)_{x,\zeta,t}\to\der^+f(x,\zeta)$ as $t\downarrow0$.
The following estimates hold
\[
\pal{P_i\pa{\delta_{1/t}\pa{f(x)^{-1}f(y)}}-P_i\pa{(\delta f)_{x,\zeta,t}}}=
\pal{P_i\pa{(\delta f)_{x,\zeta,t}E_{x,y,t} }-P_i\pa{(\delta f)_{x,\zeta,t}}} \le\rho(E_{x,y,t})<L \theta_t
\]
uniformly in $y\in B(x\delta_t\zeta,t^2)$ and $i\in\N$.
Since $P:\vM\to\ell^\infty$, $z\to (P_i(z))_{i\in\N}$ is an isometric embedding, we get
\[
\sup_{i\in\N}\pal{P_i\pa{(\delta f)_{x,\zeta,t}}-P_i\pa{\der^+f(x,\zeta)}}\to0\quad\text{as $t\downarrow0$.}
\]
If we fix $\ep>0$, then for some $R_\ep>0$
we obtain
\[
\pal{P_i\pa{\delta_{1/t}\pa{f(x)^{-1}f(y)}}-P_i\pa{\der^+f(x,\zeta)}}<\varepsilon/2
\] 
for $0<t<1/R_\ep$, $i\in\N$ and $y\in B(x\delta_t\zeta,t^2)$. In particular,
for $y\in B(x\delta_t\zeta,t^2)$ and $z\in B(x\delta_s\zeta,s^2)$, it follows that
\[
\pal{P_i\pa{\delta_{1/t}\pa{f(x)^{-1}f(y)}}-P_i\pa{\delta_{1/s}\pa{f(x)^{-1}f(z)}}}<\ep
\]
for $0<t,s<1/R_\ep$ and $i\in \mathbb{N}$. The fact that $A$ is dense at $x$ in direction $\zeta$ implies that, up to taking a larger $R_\ep$, the following
\[
\sup_{\substack{0<s,t<1/R_\ep\\ s,t\in\Q}}\,\sup_{(y,z)\in\cS^2}\pa{g^i_{t,y,-}(x)-g^i_{s,z,+}(x)}
\]
exactly equals 
\[
\sup_{\substack{0<s,t<1/R_\ep\\ s,t\in\Q}}\;\sup_{\substack{(y,z)\in\cS^2\\ y\in B(x\delta_t\zeta,t^2),\,z\in B(x\delta_s\zeta,s^2)}}P_i\pa{\delta_{1/t}\pa{f(x)^{-1}f(y)}}-P_i\pa{\delta_{1/s}\pa{f(x)^{-1}f(z)}}<\ep
\]
for every $i\in\N$. This proves that $\ph(x)=0$.

Conversely, we assume that $\ph(x)=0$ and wish to prove that $f$ is differentiable
at $x$ in direction $\zeta$. Let $\ep>0$ be arbitrarily fixed and choose $R_\ep>0$ such that
\begin{equation}\label{eq:gtpm}
\sup_{i\in\N}\sup_{\substack{0<s,t<1/R_\ep\\ s,t\in\Q}}\sup_{(y,z)\in\cS^2}\pa{g^i_{t,y,-}(x)-g^i_{s,z,+}(x)}<\ep.
\end{equation}
Observing that in the expression \eqref{eq:gtpm} we can exchange $(t,y)$ with $(s,z)$,
if $y\in B(x\delta_t\zeta,t^2)$ and $z\in B(x\delta_s\zeta,s^2)$ we get
\[
\pal{P_i\pa{\delta_{1/t}\pa{f(x)^{-1}f(y)}}-P_i\pa{\delta_{1/s}\pa{f(x)^{-1}f(z)}}}\le\ep
\]
for $0<s,t\le 1/R_\ep$. Since $x\in A$ is a density point in direction $\zeta$, we may consider again
$T_t>0$ such that $x\delta_{T_t}\zeta\in A$ and $T_t/t\to1$ as $t\downarrow0$.
Fix $\delta_0>0$ such that $T_t<2t$  for $0<t\le\delta_0$.
Whenever $0<s,t<1/2R_\ep$, we have $0<T_s,T_t<1/R_\ep$ and then
\begin{equation}\label{eq:P_il_i}
\pal{P_i\pa{\delta_{1/T_t}\pa{f(x)^{-1}f(x\delta_{T_t}\zeta)}}
-P_i\pa{\delta_{1/T_s}\pa{f(x)^{-1}f(x\delta_{T_s}\zeta)}}}\le\ep
\end{equation}
for all $i\in\N$. In particular, there exists
\[
l_i=\lim_{t\downarrow0} P_i\pa{\delta_{1/T_t}\pa{f(x)^{-1}f(x\delta_{T_t}\zeta)}}
\]
and we can pass to the limit in \eqref{eq:P_il_i} with respect to $s\downarrow0$,
that yields
\[
\pal{P_i\pa{\delta_{1/T_t}\pa{f(x)^{-1}f(x\delta_{T_t}\zeta)}}
-l_i}\le\ep.
\]
In particular, $w=(l_i)\in\ell^\infty$ and
\[
\|P\pa{\delta_{1/T_t}\pa{f(x)^{-1}f(x\delta_{T_t}\zeta)}}-w\|_{\ell^\infty}\to0\quad \text{as $t\downarrow0$}.
\]
Hence the following limit exists
\[
\lim_{t\downarrow0}\delta_{1/T_t}\pa{f(x)^{-1}f(x\delta_{T_t}\zeta)}.
\]
By Remark~\ref{derivindependent}, this shows that $f$ is differentiable at $x$ in direction $\zeta$.

We conclude that $D_{f,\zeta}=\set{x\in A_\zeta: \ph(x)=0}$, from which the measurability of $\ph$ gives our claim.


\end{document}